\theoremstyle{plain}
\newtheorem{thm}{Theorem}[section]
\newtheorem{proposition}{Proposition}[section]
\newtheorem{lemma}{Lemma}[section]
\newtheorem*{corollary}{Corollary}
\newtheorem*{main theorem}{Theorem}
\theoremstyle{definition}
\begin{document}

\title{Causal and conformal structures of globally hyperbolic spacetimes}
\author{Do-Hyung Kim}
\address{Department of Mathematics, College of Natural Science, Dankook University,
San 29, Anseo-dong, Dongnam-gu, Cheonan-si, Chungnam, 330-714,
Republic of Korea} \email{mathph@dankook.ac.kr}

\keywords{conformal transformation, conformal structure, causal
structure, causality, Cauchy surface, global hyperbolicity}

\begin{abstract}
The group of conformal diffeomorphisms and the group of causal
automorphisms on two-dimensional globally hyperbolic spacetimes
are clarified. It is shown that if spacetimes have non-compact
Cauchy surfaces, then the groups are subgroups of that of
two-dimensional Minkowski spacetime, and if spacetimes have
compact Cauchy surfaces, then the groups are subgroups of that of
two-dimensional Einstein's static universe.
\end{abstract}

\maketitle

\section{Introduction} \label{section:1}

Liouville's Theorem states that there are some kind of rigidity on
 conformal structures of semi-Euclidean space $\mathbb{R}^n_\nu$
when $n \geq 3$. In other words, any conformal diffeomorphisms
defined on an open subset $U$ of $\mathbb{R}^n_\nu$ are generated
by homotheties, isometries and inversions.(\cite{DFN},
\cite{Blair}, \cite{Hartman}, \cite{Lehto}) Since inversion has
singularity, to study conformal structure, we need conformal
compactifications. (\cite{Angles}, \cite{Schott})

In contrast to this, in two-dimensional Euclidean space, it is
known that any conformal diffeomorphisms defined on an open subset
of $\mathbb{R}^2$ are homography or anti-homography and these can
be seen as a conformal map defined on Riemann sphere.
(\cite{Angles})

In this paper, causal structures and conformal structures of
two-dimensional globally hyperbolic spacetimes are analyzed.
Though some authors introduce conformal compactification of
two-dimensional Minkowski spacetime, if we confine the subject to
spacetimes with Cauchy surfaces, we can explicitly obtain their
groups of conformal diffeomorphisms without compactifications. It
is known that the group of conformal diffeomorphisms can be
obtained by the group of causal automorphisms if the dimension of
the Lorentzian manifold is bigger than two (\cite{Fullwood},
\cite{HKM}, \cite{Malament})and so, in high dimensional Lorentzian
manifolds to study conformal structures is equivalent to study
causal structures. However, this is not the case for
two-dimensional spacetimes.

For this reason, to study conformal or causal structure of
two-dimensional spacetimes has sufficient meanings and so, in this
paper, we study coherently both causal and conformal structures of
two-dimensional spacetimes with Cauchy surfaces by tools developed
in Section 3. One of the main results is that if two-dimensional
spacetimes have non-compact Cauchy surfaces, then their structure
groups are subgroups of that of two-dimensional Minkowski
spacetime $\mathbb{R}^2_1$, and if two-dimensional spacetimes have
compact Cauchy surfaces, then their structure groups are subgroups
of that of two-dimensional Einstein's static universe $E$. In this
sense, $\mathbb{R}^2_1$ and $E$ play the role of co-universal
objects among two-dimensional globally hyperbolic spacetimes.

\section{Preliminaries} \label{section:2}

A Lorentzian manifold is a differentiable manifold with the
signature of metric as $(-,+, \cdots, +)$. A tangent vector $v \in
T_pM$ is called timelike, null and spacelike if $g(v,v)$ is less
than 0, equal to 0 and greater than 0, respectively. We say that a
tangent vector is causal if it is timelike or null. It is easy to
see that the set of all causal vectors has two connected
components and we choose one of them to be future-directed vectors
and the other to be past-directed vectors. It is a well-known fact
that a differentiable manifold $M$ has a Lorentzian metric if and
only if $M$ has a nowhere-vanishing vector field $X$. This
nowhere-vanishing vector field can be used to define a
time-orientation which determined future-directed vectors. By
spacetime, we mean a Lorentzian manifold with time-orientation.

We denote by $x \leq y$ if there exists a continuous curve
$\gamma$ from $x$ to $y$ such that for each $t$, there exists a
neighborhood $U$ of $\gamma(t)$ such that $\gamma(t_1) \leq
\gamma(t_2)$ for $t_1 < t < t_2$ and $\gamma(t_i) \in U$. When $x
\leq y$ we say that $x$ and $y$ are causally related or $y$ lies
in the future of $x$. By use of convex normal neighborhood, it can
be shown that $x \leq y$ if and only if there exists a piecewise
differentiable curve $\gamma$ such that $\gamma^\prime(t)$ is
future-directed and causal for each $t$.

A bijective map $f : M \rightarrow N$ between two Lorentzian
manifolds is called a causal isomorphism (anti-causal isomorphism,
respectively) if $f$ satisfied the condition that $x \leq y$ if
and only if $f(x) \leq f(y)$ ($f(x) \geq f(y)$, respectively.)
When the domain of definition and the codomain coincides, we call
the causal isomorphism as a causal automorphism.

It turns out that causal relation of a Lorentzian manifold has
close relations to conformal structure of the given manifold. In
1964, Zeeman has shown that any causal isomorphism on
$n$-dimensional Minkowski spacetime $\mathbb{R}^n_1$ is generated
by homothety and isometries if $n \geq 3$.(\cite{Zeeman}) In 1976,
Hawking et al. had shown that if a spacetime is strongly causal,
causal isomorphism becomes a smooth conformal
diffeomorphism.(\cite{HKM}) In 1977, Malament had shown that
causal isomorphism on any spacetime is a smooth conformal
diffeomorphism.(\cite{Malament}) However, as authors commented,
their results do not hold for $n=2$. In two-dimensional Minkowski
spacetime, there are many more continuous causal
isomorphisms.(\cite{CQG3}, \cite{CQG4}).

\section{Causal structure and covering space} \label{section:3}

Given a covering map $\pi : \overline{M} \rightarrow M$ where $M$
is a semi-Riemannian manifold with metric $g$, we define the
metric of $\overline{M}$ by use of pull-back $\overline{g} = \pi^*
g$. Then $\pi$ is a smooth local isometry. When $M$ is a
Lorentzian manifold, we define a time-orientation on
$\overline{M}$ in such a way that $\pi$ is a time-orientation
preserving local isometry. To be precise, if a vector field $X^a$
defines a time-orientation on $M$, then the pull-back 1-form
$\pi^*X_a$ can be used to define the time-orientation on
$\overline{M}$.

\begin{thm} \label{iso}

Let $\pi_M : \overline{M} \rightarrow M$ and $\pi_N : \overline{N}
\rightarrow N$ be universal covering maps of spacetimes. If $f : M
\rightarrow N$ is a causal isomorphism, then any lift of $f \circ
\pi_M$ through $\pi_N$ is a causal isomorphism.

\end{thm}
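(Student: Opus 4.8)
The plan is to produce the lift by the standard covering-space lifting lemma and then to verify the two properties a causal isomorphism must have — bijectivity and bi-directional preservation of $\leq$ — by systematically transporting causal curves up and down the commuting square
\[
\pi_N \circ \overline{f} = f \circ \pi_M .
\]
First I would note that $\overline{M}$ is simply connected, so $f \circ \pi_M : \overline{M} \to N$ admits a lift $\overline{f} : \overline{M} \to \overline{N}$ through $\pi_N$; fixing base points makes it unique, and any two lifts differ by postcomposition with a deck transformation of $\overline{N}$. Since each deck transformation $\phi$ satisfies $\pi_N \circ \phi = \pi_N$, it is an isometry for $\overline{g} = \pi_N^{*} g$ preserving the pulled-back time orientation, hence a causal automorphism of $\overline{N}$; therefore it suffices to prove that one particular lift is a causal isomorphism.

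The workhorse observation is that a time-orientation-preserving local isometry which is also a local homeomorphism (such as $\pi_M$ or $\pi_N$) carries future-directed causal curves to future-directed causal curves, and, by the unique path-lifting property of coverings, the lift of a future-directed causal curve is again future-directed causal. Consequently $\overline{a} \leq \overline{b}$ in $\overline{N}$ holds exactly when some future causal curve from $\pi_N(\overline{a})$ to $\pi_N(\overline{b})$ lifts, starting at $\overline{a}$, to a curve ending at $\overline{b}$, and the analogous statement holds for $\pi_M$. I would also record that $f$ maps future causal curves to future causal curves: using the curve characterization of $\leq$ from Section~2, if $\gamma$ is causal then $\gamma(t_1) \leq \gamma(t_2)$ locally, so $f(\gamma(t_1)) \leq f(\gamma(t_2))$, and transporting the local neighbourhood through $f$ shows that $f \circ \gamma$ satisfies the same local order condition.

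To establish that $\overline{f}$ is a bijection, I would lift $f^{-1} \circ \pi_N$ through $\pi_M$ to a map $\overline{g} : \overline{N} \to \overline{M}$, choosing base points compatibly. Then $\pi_M \circ \overline{g} \circ \overline{f} = f^{-1} \circ \pi_N \circ \overline{f} = \pi_M$, so $\overline{g} \circ \overline{f}$ is a deck transformation of $\overline{M}$ fixing a point, hence the identity; symmetrically $\overline{f} \circ \overline{g} = \mathrm{id}$, and $\overline{f}$ is a homeomorphism with inverse $\overline{g}$.

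The core of the argument is bi-directional preservation of $\leq$. Suppose $\overline{x} \leq \overline{y}$ in $\overline{M}$, witnessed by a future causal curve $\overline{\gamma}$. Its projection $\gamma = \pi_M \circ \overline{\gamma}$ is future causal in $M$, so $f \circ \gamma$ is future causal in $N$; but $\pi_N \circ (\overline{f} \circ \overline{\gamma}) = f \circ \pi_M \circ \overline{\gamma} = f \circ \gamma$, so by uniqueness of lifts $\overline{f} \circ \overline{\gamma}$ is precisely the lift of the future causal curve $f \circ \gamma$ starting at $\overline{f}(\overline{x})$, whence it is future causal and ends at $\overline{f}(\overline{y})$. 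Thus $\overline{f}(\overline{x}) \leq \overline{f}(\overline{y})$. Running the identical argument with $f^{-1}$ and $\overline{g} = \overline{f}^{-1}$ gives the converse. I expect the main obstacle to be the verification, underlying the workhorse step, that $f$ is a homeomorphism rather than a mere order bijection — only then does $f \circ \gamma$ remain a continuous causal curve and does the lift $\overline{f}$ exist; for the globally hyperbolic (hence distinguishing) spacetimes of this paper this follows from the fact that the causal order determines the manifold topology, but it must be invoked explicitly.
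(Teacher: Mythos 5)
Your proposal is correct and follows essentially the same route as the paper's own proof: lift $f \circ \pi_M$ and $f^{-1} \circ \pi_N$, deduce bijectivity from the uniqueness of lifts, and obtain order preservation in both directions by projecting a future-directed causal curve, pushing it through $f$ (resp.\ $f^{-1}$), and lifting it back, with the time-orientation-preserving local-isometry property of the covering maps guaranteeing causality upstairs. Your closing observation --- that one must know the causal isomorphism $f$ is a homeomorphism before $f \circ \pi_M$ can be lifted and before $f \circ \gamma$ is a continuous causal curve --- concerns a step the paper's proof silently takes for granted, so flagging it is an extra precaution rather than a deviation from the paper's argument.
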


\begin{proof}
 Choose $x \in M$ and
$\overline{x} \in \pi_M^{-1}(x)$. Let $y=f(x)$ and choose
$\overline{y} \in \pi_N^{-1}(y)$.

Since $\overline{M}$ is simply-connected, we can lift $f \circ
\pi_M$ through $\pi_N$ and so we get a map $\overline{f} :
(\overline{M}, \overline{x}) \rightarrow (\overline{N}, \overline{
y})$ that satisfy $\pi_N \circ \overline{f} = f \circ \pi_M$.

Since $\overline{N}$ is simply-connected, we can lift $f^{-1}
\circ \pi_N$ through $\pi_M$ and so we get a map
$\overline{f^{-1}} : (\overline{N}, \overline{y}) \rightarrow
(\overline{M}, \overline{x})$ that satisfy $\pi_M \circ
\overline{f^{-1}} = f^{-1} \circ \pi_N$.

By combining the above two equalities, we have $\pi_M \circ
\overline{f^{-1}} \circ \overline{f} = \pi_M$ and $\pi_N \circ
\overline{f} \circ \overline{f^{-1}} = \pi_N$.

Since $\overline{f^{-1}} \circ \overline{f} (\overline{x}) =
\overline{x}$ and $\overline{f} \circ
\overline{f^{-1}}(\overline{y}) = \overline{y}$, by the uniqueness
of lifts, we must have $\overline{f^{-1}} \circ \overline{f} =
Id_{\overline{M}}$ and $\overline{f} \circ \overline{f^{-1}} =
Id_{\overline{N}}$. Therefore, $\overline{f}$ is a bijection from
$\overline{M}$ to $\overline{N}$ with its inverse
$(\overline{f})^{-1} = \overline{f^{-1}}$.

We now show that $\overline{f}$ is a causal isomorphism. Choose
$\overline{x_1}$ and $\overline{x_2}$ in $\overline{M}$ such that
$\overline{x_1} \leq \overline{x_2}$ and let $\overline{\alpha}$
be a future-directed causal curve from $\overline{x_1}$ to
$\overline{x_2}$. Then, since $\pi_M$ is a time-orientation
preserving local isometry and $f$ is a causal isomorphism, the
curve $f \circ \pi_M \circ \overline{\alpha}$ is a future-directed
causal curve in $N$.

Since $\pi_N \circ f = f \circ \pi_M$, $\overline{f} \circ
\overline{\alpha}$ is the lift of the causal curve $f \circ \pi_M
\circ \overline{\alpha}$ through $\pi_N$. Since $\pi_N$ is a
time-orientation preserving local isometry, $\overline{f} \circ
\overline{\alpha}$ is a future-directed causal curve and thus we
have $\overline{f}(\overline{x_1}) \leq
\overline{f}(\overline{x_2})$.

By applying the same argument for $\overline{f^{-1}}$ and
$f^{-1}$, we can show that $\overline{x_1} \leq \overline{x_2}$ if
and only if $\overline{f}(\overline{x_1}) \leq
\overline{f}(\overline{x_2})$ and so $\overline{f}$ is a causal
isomorphism.

\end{proof}

 Under some mild conditions, any causal isomorphisms between two
 Lorentzian manifolds are smooth conformal
 diffeomorphisms.(\cite{HKM}, \cite{Malament}, \cite{Fullwood}). However,
 this is not the case when the dimension of Lorentzian manifold is
 two, and thus we need to prove the previous theorem in topological terms.
  (\cite{Zeeman}, \cite{CQG3}, \cite{CQG4}))

On the other hand, if we assume sufficient smoothness, we can
prove the following.

\begin{thm}
Let $M$ and $N$ be semi-Riemannian manifolds with arbitrary
signatures with universal covering maps $\pi_M : \overline{M}
\rightarrow M$ and $\pi_N : \overline{N} \rightarrow N$. If $f : M
\rightarrow N$ is a conformal diffeomorphism, then any lift of $f
\circ \pi_M$ through $\pi_N$ is a conformal diffeomorphism.

\end{thm}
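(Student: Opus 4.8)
The plan is to mirror the structure of the proof of Theorem \ref{iso}, but replacing the causal-curve argument with a direct verification that the lift respects the conformal metric structure. The bijectivity half of the argument is essentially identical and carries over verbatim: since $\overline{M}$ and $\overline{N}$ are simply-connected, I would lift $f \circ \pi_M$ through $\pi_N$ to obtain $\overline{f}$, lift $f^{-1} \circ \pi_N$ through $\pi_M$ to obtain $\overline{f^{-1}}$, and then use the uniqueness of lifts (having checked the maps fix the chosen base points) to conclude $\overline{f^{-1}} \circ \overline{f} = \mathrm{Id}_{\overline{M}}$ and $\overline{f} \circ \overline{f^{-1}} = \mathrm{Id}_{\overline{N}}$, so that $\overline{f}$ is a diffeomorphism. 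This part requires no change from the previous theorem; I would state it briefly and refer back.

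The genuinely new content is showing that $\overline{f}$ is conformal, i.e. that $\overline{f}^* \overline{g}_N = \Omega^2 \, \overline{g}_M$ for some positive smooth function $\Omega$ on $\overline{M}$. The key point is that all three maps $\pi_M$, $\pi_N$, and $f$ interact well with the conformal class. By construction the covering metrics are pullbacks, $\overline{g}_M = \pi_M^* g_M$ and $\overline{g}_N = \pi_N^* g_N$, so $\pi_M$ and $\pi_N$ are local isometries, hence in particular conformal with conformal factor $1$. Since $f$ is a conformal diffeomorphism, there is a positive smooth $\lambda$ on $M$ with $f^* g_N = \lambda^2 \, g_M$. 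I would then compute $\overline{f}^* \overline{g}_N$ by pulling back along the defining relation $\pi_N \circ \overline{f} = f \circ \pi_M$.

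The computation is short once set up: from $\pi_N \circ \overline{f} = f \circ \pi_M$ one gets
\[
\overline{f}^* \overline{g}_N = \overline{f}^* \pi_N^* g_N = (\pi_N \circ \overline{f})^* g_N = (f \circ \pi_M)^* g_N = \pi_M^* f^* g_N = \pi_M^*(\lambda^2 g_M) = (\lambda \circ \pi_M)^2 \, \pi_M^* g_M = (\lambda \circ \pi_M)^2 \, \overline{g}_M.
\]
Thus $\Omega = \lambda \circ \pi_M$ is the desired conformal factor, which is positive and smooth since $\lambda$ is and $\pi_M$ is a smooth submersion. I would note that this argument is insensitive to the signature of the metric, matching the generality of the statement, and that smoothness of $\overline{f}$ itself follows because a lift of a smooth map through a smooth covering map is smooth. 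I expect no serious obstacle here; the only point demanding a little care is keeping the pullback bookkeeping consistent with the composition direction $\pi_N \circ \overline{f} = f \circ \pi_M$, since reversing it would pull back the wrong factor. The conformality of $\overline{f}^{-1} = \overline{f^{-1}}$ then follows either by the same computation applied to $f^{-1}$ or simply by inverting the established relation.
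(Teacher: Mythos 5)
Your proposal is correct and follows essentially the same route as the paper: the paper's entire proof is the remark that, since $\pi_M$ and $\pi_N$ are smooth local isometries, the argument of the preceding theorem (lift $f\circ\pi_M$ and $f^{-1}\circ\pi_N$, use uniqueness of lifts for bijectivity, then check the structure is preserved) carries over. Your explicit pullback computation $\overline{f}^*\overline{g}_N = (\lambda\circ\pi_M)^2\,\overline{g}_M$ is just a clean, fully written-out version of the conformality check that the paper leaves implicit, and it is valid as stated.
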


\begin{proof}
Since $\pi_M$ and $\pi_N$ are smooth local isometry, the same
argument as in the previous theorem applies to this case.
\end{proof}

In the above proofs, though $\overline{f}$ depends on the choice
of $x$, $\overline{x}$ and $\overline{y}$, the proof tells us that
any lift of $f \circ \pi_M$ is a causal isomorphism $\overline{f}
: \overline{M} \rightarrow \overline{N}$.

\begin{proposition}
1. Let $\pi : \overline{M} \rightarrow M$ be a universal covering
of Lorentzian manifold and $Aut(\overline{M})$ be the group of
causal automorphisms of $\overline{M}$. Then, $\pi_1(M)$ is a
subgroup of $Aut(\overline{M})$.\\
2. Let $\pi : \overline{M} \rightarrow M$ be a universal covering
of semi-Riemannian manifold and $Con(\overline{M})$ be the group
of conformal diffeomorphisms of $\overline{M}$. Then, $\pi_1(M)$
is a subgroup of $Con(\overline{M})$.
\end{proposition}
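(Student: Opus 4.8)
The plan is to realize $\pi_1(M)$ concretely as the group of deck transformations of the universal covering $\pi$, and then to recognize each deck transformation as a lift of the identity map, so that the two theorems proved above apply directly with almost no extra work.

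First I would recall the standard fact from covering space theory that, for the universal covering $\pi : \overline{M} \to M$ of a connected manifold, the group $\mathrm{Deck}(\pi)$ of deck transformations --- those diffeomorphisms $\phi : \overline{M} \to \overline{M}$ satisfying $\pi \circ \phi = \pi$ --- is isomorphic as a group to $\pi_1(M)$. Fixing a basepoint $x_0 \in M$ and $\overline{x_0} \in \pi^{-1}(x_0)$, the isomorphism sends the class of a loop $\gamma$ at $x_0$ to the unique deck transformation carrying $\overline{x_0}$ to the endpoint of the lift of $\gamma$ starting at $\overline{x_0}$. Thus it suffices to show that $\mathrm{Deck}(\pi)$ is a subgroup of $Aut(\overline{M})$ in part 1, and of $Con(\overline{M})$ in part 2.

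For part 1, I would apply Theorem \ref{iso} with $N = M$, $\pi_N = \pi_M = \pi$ and $f = \mathrm{Id}_M$, which is trivially a causal isomorphism. The theorem then asserts that any lift of $\mathrm{Id}_M \circ \pi = \pi$ through $\pi$ is a causal isomorphism from $\overline{M}$ to itself, i.e. a causal automorphism. But a map $\phi : \overline{M} \to \overline{M}$ is a lift of $\pi$ through $\pi$ precisely when $\pi \circ \phi = \pi$, that is, precisely when $\phi \in \mathrm{Deck}(\pi)$. Hence every deck transformation lies in $Aut(\overline{M})$. Since both groups carry composition as their operation, $\mathrm{Deck}(\pi)$ is a subgroup of $Aut(\overline{M})$, and composing with the isomorphism $\pi_1(M) \cong \mathrm{Deck}(\pi)$ gives the claim.

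For part 2, the argument is identical, invoking instead the conformal analogue proved just above (the second theorem of this section): taking $f = \mathrm{Id}_M$, a conformal diffeomorphism, every lift of $\pi$ through $\pi$ --- again exactly the deck transformations --- is a conformal diffeomorphism, so $\mathrm{Deck}(\pi) \subseteq Con(\overline{M})$. The only point requiring care is setting up the dictionary between $\pi_1(M)$ and $\mathrm{Deck}(\pi)$ as groups and observing that deck transformations are exactly the self-lifts of $\pi$; once this is in place the two theorems do all the work, so I expect no genuine obstacle. Alternatively, one may bypass the theorems entirely: since $\pi \circ \phi = \pi$ and $\overline{g} = \pi^* g$, one has $\phi^* \overline{g} = (\pi \circ \phi)^* g = \pi^* g = \overline{g}$, so each deck transformation is in fact an isometry of $\overline{M}$, hence both a causal automorphism and a conformal diffeomorphism, which makes the subgroup assertions transparent.
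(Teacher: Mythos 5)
Your proposal is correct and follows essentially the same route as the paper: identify $\pi_1(M)$ with the group of covering transformations, then apply Theorem \ref{iso} (and its conformal analogue) with $f = Id_M$ to conclude that every covering transformation, being a lift of $\pi$ through $\pi$, is a causal automorphism (respectively conformal diffeomorphism). Your closing alternative---that deck transformations preserve the pull-back metric and are therefore isometries---is a nice shortcut the paper does not use, but it is an aside rather than a different proof strategy.
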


\begin{proof}
 Let $D$ be the group of covering transformations of $\pi : \overline{M} \rightarrow M$, then $\pi_1(M)
= D$ since $\overline{M}$ is simply-connected. It is sufficient to
show that each covering transformation is a causal isomorphism.

If we let $M = N$ and $f = Id_M$ in the previous theorem, then
$\overline{f}$ is a covering transformation of $\pi : \overline{M}
\rightarrow M$. Furthermore, any covering transformation can be
obtained in this way, since any covering transformation is a lift
of $Id_M : M \rightarrow M$. In other words, any covering
transformation is a causal isomorphism from
$\overline{M}$ to $\overline{M}$.\\

 The same argument can be applied to prove the second statement.
\end{proof}

In the following, we are mainly interested in Lorentzian manifold
but since the same argument can be applied to semi-Riemannian
manifold with arbitrary signature, we state the semi-Riemannian
case without proof.

Let $f : M \rightarrow M$ be a causal isomorphism (or, conformal
diffeomorphism, respectively) and $\overline{f} : \overline{M}
\rightarrow \overline{M}$ be a lift of $f \circ \pi$ where
$\overline{M}$ is a universal covering space. Then, for any
covering transformation $\phi \in D$, we have $\pi \circ
\overline{f} \circ \phi = f \circ \pi \circ \phi = f \circ \pi$.
In other words, $\overline{f} \circ \phi \in Aut(\overline{M})$
 \, (or, $\overline{f} \circ \phi \in Con(\overline{M})$, respectively)
is a lift of $f \circ \pi$. Since $\overline{M}$ is
simply-connected, $D$ acts on each fiber $\pi^{-1}(x)$
transitively. Therefore, any lift of $f \circ \pi$ is in $\{
\overline{f} \circ \phi \,\, | \,\, \phi \in D \}$.

\begin{thm} \label{lift}
Let $\overline{f} : \overline{M} \rightarrow \overline{M}$ be a
lift of $f \circ \pi$. Then, for any lift $\overline{g} :
\overline{M} \rightarrow \overline{M}$ of $f \circ \pi$, there
exists $\phi$ and $\psi$ in $D$ such that $\overline{g} =
\overline{f} \circ \phi$ and $\overline{g} = \psi \circ
\overline{f}$.
\end{thm}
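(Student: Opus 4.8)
The plan is to prove the two assertions separately, treating the right-multiplication identity $\overline{g} = \overline{f} \circ \phi$ as essentially contained in the discussion immediately preceding the theorem, and reserving the substantive work for the left-multiplication identity $\overline{g} = \psi \circ \overline{f}$. For the first identity, the preceding paragraph already establishes that every lift of $f \circ \pi$ lies in the set $\{ \overline{f} \circ \phi \mid \phi \in D \}$: since $\overline{M}$ is simply-connected a lift is completely determined by the image of a single point, and since $D$ acts transitively on each fiber $\pi^{-1}(x)$, post-composing $\overline{f}$ with a suitable covering transformation $\phi$ realizes any prescribed image point. So there exists $\phi \in D$ with $\overline{g} = \overline{f} \circ \phi$, and nothing further is needed here.

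The real content is the existence of $\psi$. The key structural input, supplied by Theorem \ref{iso} applied with $N = M$, is that $\overline{f}$ is a bijection of $\overline{M}$ whose inverse $\overline{f}^{-1}$ is itself a lift of $f^{-1} \circ \pi$; consequently $\pi \circ \overline{f} = f \circ \pi$ and $\pi \circ \overline{f}^{-1} = f^{-1} \circ \pi$. First I would set $\psi := \overline{g} \circ \overline{f}^{-1}$ and verify that it is a covering transformation by the computation
\[
\pi \circ \psi = \pi \circ \overline{g} \circ \overline{f}^{-1} = (f \circ \pi) \circ \overline{f}^{-1} = f \circ (f^{-1} \circ \pi) = \pi ,
\]
where the second equality uses that $\overline{g}$ is a lift of $f \circ \pi$. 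Hence $\pi \circ \psi = \pi$, so $\psi \in D$ and $\overline{g} = \psi \circ \overline{f}$, as required. I would also note the equivalent route that takes the $\phi$ from the first part and shows $\psi = \overline{f} \circ \phi \circ \overline{f}^{-1} \in D$ by the same cancellation, so that conjugation by $\overline{f}$ carries $D$ into itself.

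The only delicate point, and hence the main obstacle, is ensuring that the inverse $\overline{f}^{-1}$ exists and is again a lift, so that the identity $\pi \circ \overline{f}^{-1} = f^{-1} \circ \pi$ used above is legitimate. This is exactly what is secured in the proof of Theorem \ref{iso}, where the inverse lift was constructed and shown to coincide with $(\overline{f})^{-1}$. Once that fact is invoked, both assertions collapse to the two elementary composition computations, and the proof is complete.
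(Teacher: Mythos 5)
Your proof is correct, but for the identity $\overline{g} = \psi \circ \overline{f}$ you take a genuinely different route from the paper. The paper handles both identities by one uniform mechanism: pick suitable points in a single fiber, use transitivity of $D$ on fibers to produce $\phi$ (resp.\ $\psi$) matching $\overline{f}\circ\phi$ (resp.\ $\psi\circ\overline{f}$) with $\overline{g}$ at one point, and then conclude equality of the two lifts of $f\circ\pi$ by uniqueness of lifts. Your treatment of the first identity (transitivity plus the fact that a lift is determined by its value at one point) is essentially that same argument, but for the second you instead define $\psi := \overline{g}\circ\overline{f}^{-1}$ explicitly and verify $\pi\circ\psi = \pi$ by direct cancellation, using that $(\overline{f})^{-1} = \overline{f^{-1}}$ is itself a lift of $f^{-1}\circ\pi$, which Theorem \ref{iso} provides. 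This buys you a closed-form expression for $\psi$, and your remark that $\psi = \overline{f}\circ\phi\circ\overline{f}^{-1}\in D$ is exactly the computation by which the paper later proves Proposition \ref{normal} (normality of $D$ in $A$), so your argument anticipates that result. What the paper's route buys in exchange is economy of hypotheses for this half: producing $\psi$ by transitivity and uniqueness of lifts never invokes invertibility of $\overline{f}$, whereas your construction leans on Theorem \ref{iso} throughout (as you correctly flag). Two small points to tighten: membership in $D$ requires $\psi$ to be a homeomorphism, which you should note follows because $\overline{g}$ and $\overline{f}^{-1}$ are continuous bijections with continuous inverses (all being lifts); and your appeal to the paragraph preceding the theorem for the first identity is mildly circular, since that paragraph's closing claim is precisely what the theorem formalizes --- but since you also sketch the transitivity-plus-uniqueness argument, no real gap results.
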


\begin{proof}

Choose $\overline{x} \in \pi^{-1}(x)$ for some $x$ and let
$\overline{y}$ be such that $\overline{g}(\overline{y}) =
\overline{x}$ and $\overline{z}$ be such that
$\overline{f}(\overline{z}) = \overline{x}$. Since $D$ acts on
$\pi^{-1}(f^{-1}(x))$ transitively, there exists $\phi \in D$ such
that $\phi(\overline{y})=\overline{z}$. Then, $\overline{f} \circ
\phi (\overline{y}) = \overline{f}(\overline{z}) = \overline{x}$.
By the uniqueness of lifts, we have $\overline{f} \circ \phi =
\overline{g}$.

Likewise, since $D$ acts on $\pi^{-1}(f^{-1}(x))$ transitively,
there exists $\psi \in D$ such that
$\overline{\psi}(\overline{f}(\overline{x})) =
\overline{g}(\overline{x})$. Since $\psi \circ \overline{f}$ is a
lift of $f \circ \pi$, by uniqueness of lifts, we have $\psi \circ
\overline{f} = \overline{g}$.

\end{proof}

In the Theorem \ref{iso}, we have shown that any causal
isomorphism $f : M \rightarrow M$ can be lifted to $\overline{f} :
\overline{M} \rightarrow \overline{M}$. However, in general, it is
not true that, given causal isomorphism $\overline{f} :
\overline{M} \rightarrow \overline{M}$ there exist a causal
isomorphism $f : M \rightarrow M$ such that $f \circ \pi = \pi
\circ \overline{f}$. In the following, we study causal isomorphism
$\overline{f}$ on $\overline{M}$ which have causal isomorphism $f$
on $M$ such that $f \circ \pi = \pi \circ \overline{f}$.

\begin{proposition} \label{subgroup}
Let $A$ be the set of all causal isomorphisms (or, conformal
diffeomorphisms, respectively) $\phi : \overline{M} \rightarrow
\overline{M}$ such that $\pi \circ \phi = f \circ \pi$ for some
causal isomorphism (or, conformal diffeomorphism, respectively) $f
: M \rightarrow M$. Then $A$ is a subgroup of $Aut(\overline{M})$
($Con(\overline{M})$, respectively).
\end{proposition}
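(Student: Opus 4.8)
The plan is to verify the three subgroup axioms directly: that $A$ contains the identity, is closed under composition, and is closed under taking inverses. Throughout I work with the causal-isomorphism case; the conformal case is verbatim identical since every tool invoked (lifting, uniqueness of lifts, Theorem \ref{iso} and its conformal analogue) has been established in both settings earlier in this section.

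First I would note that $A$ is nonempty and contains the identity of $\mathrm{Aut}(\overline{M})$: taking $f = \mathrm{Id}_M$, any lift of $\mathrm{Id}_M \circ \pi = \pi$ is a covering transformation, and in particular $\mathrm{Id}_{\overline{M}}$ itself satisfies $\pi \circ \mathrm{Id}_{\overline{M}} = \mathrm{Id}_M \circ \pi$, so $\mathrm{Id}_{\overline{M}} \in A$. For closure under composition, suppose $\phi_1, \phi_2 \in A$ with witnesses $f_1, f_2$ so that $\pi \circ \phi_i = f_i \circ \pi$. Then I compute
\begin{equation*}
\pi \circ (\phi_1 \circ \phi_2) = (\pi \circ \phi_1) \circ \phi_2 = (f_1 \circ \pi) \circ \phi_2 = f_1 \circ (\pi \circ \phi_2) = f_1 \circ f_2 \circ \pi.
\end{equation*}
Since $f_1 \circ f_2$ is again a causal isomorphism of $M$ (the composition of causal isomorphisms is one), and $\phi_1 \circ \phi_2$ is a causal automorphism of $\overline{M}$, this exhibits $\phi_1 \circ \phi_2 \in A$ with witness $f_1 \circ f_2$.

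For closure under inversion, let $\phi \in A$ with witness $f$, so $\pi \circ \phi = f \circ \pi$. Composing on the left with $f^{-1}$ and on the right with $\phi^{-1}$ gives $f^{-1} \circ \pi \circ \phi \circ \phi^{-1} = \pi \circ \phi^{-1}$, that is, $\pi \circ \phi^{-1} = f^{-1} \circ \pi$. Since $f^{-1}$ is a causal isomorphism of $M$ and $\phi^{-1}$ is a causal automorphism of $\overline{M}$ (as $\mathrm{Aut}(\overline{M})$ is a group), this shows $\phi^{-1} \in A$ with witness $f^{-1}$.

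The verification is almost purely formal, amounting to pushing the intertwining relation $\pi \circ \phi = f \circ \pi$ through the group operations; the only substantive points are that the candidate witnesses $\mathrm{Id}_M$, $f_1 \circ f_2$, and $f^{-1}$ are themselves genuine causal isomorphisms of $M$, which is immediate, and that $\phi_1 \circ \phi_2$ and $\phi^{-1}$ lie in $\mathrm{Aut}(\overline{M})$, which holds because $\mathrm{Aut}(\overline{M})$ is a group. Consequently I do not anticipate any real obstacle; the only point deserving care is keeping the intertwining identities composed on the correct side so that the witness $f$ emerges cleanly.
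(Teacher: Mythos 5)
Your proof is correct and follows essentially the same route as the paper: both verify closure under composition and inversion by pushing the intertwining relation $\pi \circ \phi = f \circ \pi$ through the group operations, with identical computations. Your additional explicit check that $\mathrm{Id}_{\overline{M}} \in A$ is a minor (and harmless) elaboration the paper leaves implicit.
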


\begin{proof}
Choose $\phi$ and $\psi$ in $A$. Then there exist $f_1$ and $f_2$
such that $\pi \circ \phi = f_1 \circ \pi$ and $\pi \circ \psi =
f_2 \circ \pi$ and thus we have $\pi \circ \phi \circ \psi = f_1
\circ \pi \circ \psi = f_1 \circ f_2 \circ \pi$. Therefore, we
have $\phi \circ \psi \in A$. Also, if $\pi \circ \phi = f \circ
\pi$, then $f^{-1} \circ \pi = \pi \circ \phi^{-1}$ and thus
$\phi^{-1} \in A$.
\end{proof}

\begin{proposition} \label{normal}
$D$ is a normal subgroup of  $A$.
\end{proposition}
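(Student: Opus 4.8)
The plan is to verify first that $D \subseteq A$, so that asking whether $D$ is normal in $A$ makes sense, and then to show that $A$ conjugates $D$ into itself. For the inclusion, note that any covering transformation $\phi \in D$ satisfies $\pi \circ \phi = \pi = Id_M \circ \pi$, and since $Id_M$ is a causal isomorphism of $M$, the defining property in Proposition \ref{subgroup} gives $\phi \in A$.

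To establish normality, I would fix $\psi \in A$ and $\phi \in D$ and show that the conjugate $\psi \circ \phi \circ \psi^{-1}$ again lies in $D$. Since $\psi \in A$, there is a causal isomorphism $f : M \rightarrow M$ with $\pi \circ \psi = f \circ \pi$; as already noted in the proof of Proposition \ref{subgroup}, this relation also yields $\pi \circ \psi^{-1} = f^{-1} \circ \pi$. Because $\phi \in D$ is a covering transformation, $\pi \circ \phi = \pi$. The key step is then the direct computation
\begin{align*}
\pi \circ \psi \circ \phi \circ \psi^{-1}
&= f \circ \pi \circ \phi \circ \psi^{-1} \\
&= f \circ \pi \circ \psi^{-1} \\
&= f \circ f^{-1} \circ \pi \\
&= \pi,
\end{align*}
where the first equality uses $\pi \circ \psi = f \circ \pi$, the second uses $\pi \circ \phi = \pi$, and the third uses $\pi \circ \psi^{-1} = f^{-1} \circ \pi$. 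Thus $\psi \circ \phi \circ \psi^{-1}$ is a causal isomorphism of $\overline{M}$, being a product of elements of the group $A$, and it satisfies $\pi \circ (\psi \circ \phi \circ \psi^{-1}) = \pi = Id_M \circ \pi$, so it is a lift of $Id_M \circ \pi$ through $\pi$. By the characterization established in the discussion preceding Theorem \ref{lift} (together with the fact that every such lift has the form $\overline{f} \circ \phi$ with $\phi \in D$ and $\overline{f}$ a covering transformation), every lift of $Id_M \circ \pi$ is itself a covering transformation. Hence $\psi \circ \phi \circ \psi^{-1} \in D$, which is exactly the normality condition.

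I expect the only genuine subtlety to be this final step: one must invoke the fact, drawn from covering space theory and the earlier results, that a self-map $h$ of $\overline{M}$ satisfying $\pi \circ h = \pi$ is necessarily a covering transformation and not merely some other lift of $\pi$. Everything else reduces to routine manipulation of the two defining relations $\pi \circ \psi = f \circ \pi$ and $\pi \circ \phi = \pi$, so the computation above is really the whole content of the argument.
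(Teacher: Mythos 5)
Your proof is correct and takes essentially the same route as the paper's: the identical conjugation computation $\pi \circ \psi \circ \phi \circ \psi^{-1} = f \circ \pi \circ \phi \circ \psi^{-1} = f \circ \pi \circ \psi^{-1} = f \circ f^{-1} \circ \pi = \pi$, followed by the observation that a homeomorphism $h$ of $\overline{M}$ with $\pi \circ h = \pi$ is by definition a covering transformation, so the conjugate lies in $D$. Your explicit check that $D \subseteq A$ (taking $f = Id_M$) and your care over the final step are details the paper leaves implicit, but they do not change the argument.
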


\begin{proof}
 Let $\overline{\alpha} \in A$ and $\phi
\in D$. It suffices to show $\overline{\alpha} \circ \phi \circ
\overline{\alpha}^{-1} \in D$. We have

\begin{eqnarray*}
\pi \circ \overline{\alpha} \circ \phi \circ
\overline{\alpha}^{-1} &=& \alpha \circ \pi \circ \phi \circ
(\overline{\alpha})^{-1} \,\,\,\,\,  (\because \pi \circ
\overline{\alpha} =
\alpha \circ \pi))\\
&=& \alpha \circ \pi \circ (\overline{\alpha})^{-1} \,\,\,\,\,
(\because \pi
\circ \phi = \pi)\\
&=& \alpha \circ \pi \circ \overline{\alpha^{-1}} \\
&=& \alpha \circ \alpha^{-1} \circ \pi \,\,\,\,\,  (\because \pi
\circ
\overline{f} = f \circ \pi)\\
&=& \pi \\
\end{eqnarray*}

Therefore, we have $\overline{\alpha} \circ \phi \circ
\overline{\alpha}^{-1} \in D$.

\end{proof}

From the above proposition, we can see that if the covering space
$\overline{M}$ is universal, then, since $\pi_1(M) = D$, $A$ is
not trivial if $\pi_1(M)$ is non-trivial.
 Furthermore, we can show
that $A$ is the maximal subgroup of $Aut(\overline{M})$ that
contains $D$ as a normal subgroup. In other words, $A$ is the
normalizer $N(D)$ of $D$ in $Aut(\overline{M})$.

\begin{thm} \label{all}
Let $\pi : \overline{M} \rightarrow M$ be a universal covering
space and $G$ be a subgroup of $Aut(\overline{M})$ (or,
$Con(\overline{M})$, respectively) that contains $D$ as a normal
subgroup. Then $G$ is a subgroup of $A$.
\end{thm}
\begin{proof}
Let $\overline{f} : \overline{M} \rightarrow \overline{M}$ be in
$G$. It suffices to find a causal isomorphism $f : M \rightarrow
M$ such that $\pi \circ \overline{f} = f \circ \pi$. Define $f : M
\rightarrow M$ as follows. For $x \in M$, choose $\overline{x} \in
\pi^{-1}(x)$ and let $f(x) = \pi \circ
\overline{f}(\overline{x})$. To show $f$ is well-defined, let
$\pi(\overline{x}_1) = \pi(\overline{x}_2) = x$. Since
$\overline{M}$ is a universal covering, $D$ acts transitively and
thus we can choose $\varphi \in D$ such that
$\varphi(\overline{x}_1) = \overline{x}_2$ and then we have
$\overline{f} \circ \varphi(\overline{x}_1) =
\overline{f}(\overline{x}_2)$. Since $D$ is normal in $G$, we can
find $\psi \in D$ such that $\psi \circ
\overline{f}(\overline{x}_1) = \overline{f}(\overline{x}_2)$.
Therefore, we have $\pi \circ \psi \circ \circ
\overline{f}(\overline{x}_1) = \pi \circ
\overline{f}(\overline{x}_2)$ and so $\pi \circ \overline{f}
(\overline{x}_1) = \pi \circ \overline{f}(\overline{x}_2)$.

Obviously, $f$ is surjective and we now show that $f$ is
injective. If $f(x_1) = f(x_2)$, then we have $\pi \circ
\overline{f}(\overline{x}_1) = \pi \circ
\overline{f}(\overline{x}_2)$ where $\pi(\overline{x}_1) = x_1$
and $\pi(\overline{x}_2) = x_2$. In other words,
$\overline{f}(\overline{x}_1) = \overline{f}(\overline{x}_2)$ lie
in the same fiber. Thus, there exists $\varphi \in D$ such that
$\varphi \circ \overline{f}(\overline{x}_1) =
\overline{f}(\overline{x}_2)$ and so there exists $\psi \in D$
such that $\overline{f} \circ \psi(\overline{x}_1) =
\overline{f}(\overline{x}_2)$ since $D$ is normal in $G$. Since
$\overline{f}$ is injective, we have $\psi(\overline{x}_1) =
\overline{x}_2$. Therefore, we have $\pi \circ
\psi(\overline{x}_1) = \pi(\overline{x}_2)$ and thus we have $x_1
= \pi(\overline{x}_1) = \pi(\overline{x}_2) = x_2$.

\end{proof}

We now prove one of main theorems.

\begin{thm} \label{main}
Let $M$ be a Lorentzian manifold (or, semi-Riemannian manifold)
with universal covering $\pi : \overline{M} \rightarrow M$. Then,
$Aut(M)$ (or, $Con(M)$) is isomorphic to $N(D)/D$ in which $N(D)$
is the normalizer of $D$ in $Aut(\overline{M})$ (or,
$Con(\overline{M})$.)
\end{thm}

\begin{proof}

Define $\Phi : Aut(M) \rightarrow A/D$ by $\Phi(f) = \overline{f}
D$ where $A$ is the group defined in Proposition \ref{subgroup}.
Then, by Theorem \ref{lift}, $\Phi$ is well-defined and it is
obvious that $\Phi$ is surjective. If $\overline{f}D =
\overline{g}D$, then there exists $\varphi \in D$ such that
$\overline{f} = \overline{g} \circ \varphi$. Therefore, from $f
\circ \pi = \pi \circ \overline{f} = \pi \circ \overline{g} \circ
\varphi = g \circ \pi \circ \varphi = g \circ \pi$, we have $f =
g$ and thus $\Phi$ is injective.

To show that $\Phi$ is a homomorphism, let $f$ and $g$ be in
$Aut(M)$ and $\overline{f}$ and $\overline{g}$ be lifts of $f$ and
$g$. Since $\pi \circ \overline{g} \circ \overline{f} = g \circ
\pi \circ \overline{f} = g \circ f \circ \pi$, $\overline{g} \circ
\overline{f}$ is a lift of $g \circ f$ and thus $\Phi(g \circ f) =
\Phi(g)\Phi(f)$.

By Proposition \ref{subgroup}, $A$ is a subgroup of
$Aut(\overline{M})$ and by Theorem \ref{all}, $A$ is the
normalizer of $D$. Therefore, $\Phi$ is an isomorphism from
$Aut(M)$ to $N(G)/G$.

\end{proof}

When $M$ is a globally hyperbolic spacetime with Cauchy surface
$\Sigma$, by Theorem 1 in \cite{Bernal1}, there exists a
diffeomorphism $f : \mathbb{R} \times \Sigma \rightarrow M$. Since
$\mathbb{R} \times \Sigma$ is homotopy equivalent to $\Sigma$, we
have $\pi_1(M) = \pi_1(\Sigma)$. Therefore, from the above
theorem, we can see that causal structure and conformal structure
of $M$ depend on corresponding structure of its universal covering
space and topological structure of its Cauchy surface $\Sigma$. To
make it explicit, let $p : \overline{\Sigma} \rightarrow \Sigma$
be a universal covering map. Then, $\pi : \mathbb{R} \times
\overline{\Sigma} \rightarrow M$ defined by $\pi(t,\overline{x}) =
f(t, p(\overline{x}))$ is a universal covering map of $M$. Then
the above theorem tells us that the group of causal isomorphisms
of $M$ is isomorphic to $N/\pi_1(\Sigma)$ where $N$ is the
normalizer of $D$.

By Theorem 3.69 in \cite{Beem}, for any given complete Riemannian
manifold $\Sigma$, we can make $M = \mathbb{R} \times \Sigma$ into
globally hyperbolic spacetime with Cauchy surface $\Sigma$. Then,
we can see that the group of causal isomorphisms, $Aut(M)$, is
given by $N/ \pi_1(\Sigma)$.

For example, let $\Sigma$ be a complete Riemannian manifold $T^2 =
S^1 \times S^1$. Then $M = \mathbb{R} \times T^2$ is globally
hyperbolic with its universal covering space diffeomorphic to
$\mathbb{R}^3$. If we use a usual metric on $T^2$ and covering map
$\pi(t,u,v) = (t, e^{iu}, e^{iv})$, then the pull-back metric on
the universal covering space $\overline{M}$ is the Minkowski
metric on $\mathbb{R}^3_1$. Therefore, we can see that
$Aut(\overline{M})$ is isomorphic to $Aut(\mathbb{R}^3_1)$ which
is generated by homothety and isometry, where homothety factor
must be pisitive by Zeeman's theorem.(Ref. \cite{Zeeman}). Then,
Theorem \ref{main} tells us that $Aut(M)$ is isomorphic to $N / (
\mathbb{Z} \times \mathbb{Z} )$ where $N$ is a normalizer of
$\mathbb{Z} \times \mathbb{Z}$ in $Aut(\mathbb{R}^3_1)$. If we
want to get $Con(M)$, since $Con(\mathbb{R}^3_1)$ is given by
Liouville's theorem(\cite{DFN}, \cite{Blair}), we can get the
similar result.

As a second example, we consider the universal covering map $\pi :
S^n \rightarrow \mathbb{R}P^n$ for $n \geq 2$. The group of
covering transformations $D$ consists of the identity map and the
antipodal map. Then, it is easy to see that $g \in N(D)$ if and
only if $g(-x)=-g(x)$. Therefore, we have $Con(\mathbb{R}P^n) = \{
g \in Con(S^n) \,\, | \,\, g(-x)=-g(x) \}/\sim$ where the
equivalence relation $\sim$ is given by $g \sim -g$ for each $g
\in Con(S^n)$.

In fact, for $n \geq 3$, since $Con(S^n)$ is isomorphic to
$SO(n+1,1)$, we have that $Con(\mathbb{R}P^n)$ is isomorphic to
$N/\mathbb{Z}_2$ where $N$ is the normalizer of $\mathbb{Z}_2$ in
$SO(n+1,1)$.

As can be seen in the above examples and Theorem \ref{main}, to
obtain the group of causal automorphisms of a globally hyperbolic
Lorentzian manifold, we only need to study causal structures of
universal covering space and the topological structures of its
Cauchy surface.

\section{Causal structure of two-dimensional spacetimes} \label{sectione:4}

Two-dimensional spacetimes play an important role in string theory
since world sheets of strings can be seen as two-dimensional
spacetimes. Therefore, to study conformal and causal structure of
two-dimensional spacetimes is important. In this section, we study
causal structures two-dimensional globally hyperbolic spacetimes.

Causal structure of two-dimensional spacetimes with non-compact
Cauchy surfaces has been analyzed in \cite{JGP1} and we briefly
review the result since they play a key role in the analysis of
two-dimensional spacetimes with compact Cauchy surfaces.

Let $M$ be a two-dimensional spacetime with non-compact Cauchy
surfaces. Then its Cauchy surface $\Sigma$ is homeomorphic to
$\mathbb{R}$. If we choose a homeomorphism from $\Sigma$ to the
Cauchy surface $\{ \, (x,0) \,\, | \,\, x \in \mathbb{R} \, \}$ of
$\mathbb{R}^2_1$, then the homeomorphism can be uniquely extended
to a map from $M$ into a subset of $\mathbb{R}^2_1$ that contains
$x$-axis as a Cauchy surface in such a way that the extended map
preserves causal relations. The details can be found in
\cite{CQG2}. The following is Theorem 5.1 in \cite{CQG2}.

\begin{thm} \label{imbedding}
Any two-dimensional spacetime with a non-compact Cauchy surface
can be causally isomorphically imbedded in $\mathbb{R}^2_1$.
\end{thm}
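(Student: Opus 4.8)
The plan is to build explicit double-null coordinates on $M$ and read them as the two null coordinates of $\mathbb{R}^2_1$. First I would record the global structure. Since $M$ has a Cauchy surface it is globally hyperbolic, and the splitting result already invoked in Section 3 gives $M \approx \mathbb{R} \times \Sigma \approx \mathbb{R}^2$, so $M$ is simply connected and orientable. In a time-oriented two-dimensional Lorentzian manifold the null cone at each point degenerates into two distinct lines; orientability together with time-orientability lets me label these two null line fields globally and coherently as a \emph{right} family and a \emph{left} family and pick future-directed generators for each. Integrating them produces two transverse foliations of $M$ by inextendible null curves, and through each $p\in M$ there pass exactly one right-null curve and one left-null curve.

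Next I would construct the map. Because $\Sigma$ is a Cauchy surface, every inextendible causal curve, in particular every null leaf of either family, meets $\Sigma$ exactly once. Fixing a homeomorphism $\sigma:\mathbb{R}\to\Sigma$, I define $a(p)$ and $b(p)$ to be the $\sigma$-parameters of the points where the right-null and left-null curves through $p$ meet $\Sigma$, orienting the two families so that both $a$ and $b$ increase toward the future. Setting $F=(a,b):M\to\mathbb{R}^2$ and reading $\mathbb{R}^2$ as $\mathbb{R}^2_1$ in null coordinates $u,v$ yields the candidate embedding; with these conventions $F(\Sigma)$ is an acausal curve that realizes the $x$-axis as a Cauchy surface of the image, in agreement with the statement recalled just before the theorem.

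I would then dispatch the two easy features. Injectivity holds because $a$ is constant exactly on right-null leaves and $b$ exactly on left-null leaves, and two leaves of these transverse foliations meet in at most one point (a second intersection would close a causal loop, impossible in a globally hyperbolic spacetime), so $F(p)=F(q)$ forces $p,q$ onto a common right-null and a common left-null leaf and hence $p=q$; continuity of $F$ and $F^{-1}$ comes from smooth dependence of the foliations on the base point. The forward implication $p\leq q\Rightarrow F(p)\leq F(q)$ is immediate, since a future-directed causal tangent is a nonnegative combination of the two future null generators, so along any future causal curve both $a$ and $b$ are nondecreasing, giving $a(p)\leq a(q)$ and $b(p)\leq b(q)$, which is exactly $F(p)\leq F(q)$ in $\mathbb{R}^2_1$.

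The hard part will be the reverse implication, equivalently the claim that the ambient order restricts correctly to the image $U=F(M)$. Given $F(p)\leq F(q)$, i.e. $a(p)\leq a(q)$ and $b(p)\leq b(q)$, the natural witness is the \emph{staircase}: follow the right-null curve through $p$ to the future until $b$ reaches $b(q)$, arriving at the point $r$ with $F(r)=(a(p),b(q))$, then follow the left-null curve to the future until $a$ reaches $a(q)$, arriving at $q$. The genuine obstacle is showing that $r$ exists in $M$, that is $(a(p),b(q))\in U$, and this is where global hyperbolicity must be used quantitatively rather than formally. The clean route is to identify $U$ as the Cauchy development $D(C)$ of the acausal curve $C=F(\Sigma)$ in $\mathbb{R}^2_1$: each point of $M$ is recovered as the unique intersection of a right-null and a left-null ray issued from $\Sigma$, which is precisely the way points of $D(C)$ are coordinatized by the two null rays from $C$, so $F$ maps $M$ homeomorphically onto $D(C)$, and on such a development the ambient order agrees with the intrinsic one; hence the whole rectangle spanned by $F(p)$ and $F(q)$ lies in $U$, the point $r$ exists, and the staircase yields $p\leq r\leq q$. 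I expect the verification that $U=D(C)$, in particular that $C$ is achronal in $\mathbb{R}^2_1$ and that no inextendible causal curve through a point of $U$ can avoid $C$, to be the main technical burden, to be discharged by transporting the Cauchy property of $\Sigma$ in $M$ across the two null foliations to $C$ in $\mathbb{R}^2_1$.
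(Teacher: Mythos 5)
Your map is in substance the same one the paper uses: the paper (following Theorem 5.1 of \cite{CQG2}, sketched again in Section 5) sends $p$ to the unique point $q\in J^{+}(\mathbb{R}_0)$ with $J^{-}(q)\cap\mathbb{R}_0=f\bigl(J^{-}(p)\cap\Sigma\bigr)$, and the endpoints of the shadow $J^{-}(p)\cap\Sigma$ are exactly the feet of the two null curves through $p$, i.e.\ your $(a(p),b(p))$. Two small slips in the easy parts: with a single parametrization $\sigma$ of $\Sigma$ you cannot orient the families so that both $a$ and $b$ increase to the future (already in $\mathbb{R}^2_1$ the two null parameters $x+t$ and $x-t$ move oppositely along timelike curves); this is only a sign convention. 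And ``a second intersection would close a causal loop'' covers only the case where the two connecting leaf segments have opposite time orientation; when both are future-directed you need instead the strict monotonicity of $a$ along left leaves (and of $b$ along right leaves), which follows from transversality.

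The genuine gap is in the hard direction, exactly where you place it. You propose to obtain the staircase corner from the identification $F(M)=D(C)$ with $C=F(\Sigma)$, the Cauchy development being taken in $\mathbb{R}^2_1$. But $C$ is the entire $x$-axis, so $D(C)=\mathbb{R}^2_1$, and your claim asserts that every two-dimensional globally hyperbolic spacetime with non-compact Cauchy surface is causally isomorphic to all of $\mathbb{R}^2_1$. This is false: take $M=\{(x,t)\,:\,|t|<1\}\subset\mathbb{R}^2_1$, which is globally hyperbolic with the $x$-axis as non-compact Cauchy surface; your $F$ is the identity, so $F(M)=M\neq D(C)$, and indeed $M$ cannot be order-isomorphic to $\mathbb{R}^2_1$, since $(0,0)$ and $(10,0)$ have no common causal upper bound in the strip while $\mathbb{R}^2_1$ is directed. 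The flawed step is ``each point of $D(C)$ is coordinatized by the two null rays from $C$'': each point of $M$ does determine such a pair of rays (that is your injectivity), but conversely two null rays issued from $\Sigma$ need not meet inside $M$ --- they can leave the spacetime before intersecting, as in the strip --- so surjectivity onto $D(C)$ fails, and with it your justification that $(a(p),b(q))\in F(M)$. What the corner actually needs is an intrinsic compactness argument: by global hyperbolicity $T=J^{+}(\Sigma)\cap J^{-}(q)$ is compact; the inextendible right-null curve through $p$ enters $T$ at its foot on $\Sigma$, cannot be imprisoned in the compact set $T$, cannot meet $\Sigma$ a second time (Cauchy property), and cannot cross the right-null side of $\partial T$ (uniqueness of leaves through a point), so it must exit through the left-null segment from $q$, giving $r$ with $p\leq r\leq q$. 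That is the step where global hyperbolicity is used quantitatively, and it is the content of the proof in \cite{CQG2} that the paper invokes rather than reproves.
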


The following is Theorem 2.2 in \cite{CQG4}.

\begin{thm} \label{auto}
Let $F : \mathbb{R}^2_1 \rightarrow \mathbb{R}^2_1$ be a causal
automorphisms on $\mathbb{R}^2_1$. Then, there exist unique
homeomorphisms $\varphi$ and $\psi$ of $\mathbb{R}$, which are
either both increasing or both decreasing, such that if $\varphi$
and $\psi$ are increasing, then we have $F(x,t) = \frac{1}{2}
\Big( \varphi(x+t)+\psi(x-t), \varphi(x+t)-\psi(x-t) \Big)$, or if
$\varphi$ and $\psi$ are decreasing, then we have $F(x,t) =
\frac{1}{2} \Big(\varphi(x-t)+\psi(x+t), \varphi(x-t)-\psi(x+t)
\Big)$.\\
Conversely, for any given homeomorphisms $\varphi$ and $\psi$ of
$\mathbb{R}$, which are either both increasing or both decreasing,
the function $F$ defined as above is a causal automorphism on
$\mathbb{R}^2_1$.
\end{thm}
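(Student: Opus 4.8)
The plan is to reduce the statement to a purely order-theoretic classification of the automorphisms of $\mathbb{R}^2$ equipped with its causal order, and then translate back into the stated coordinates. First I would pass to null coordinates $u = x+t$ and $v = x-t$, so that $(x,t) = \big(\frac{1}{2}(u+v), \frac{1}{2}(u-v)\big)$, and a short computation with the metric $-dt^2 + dx^2$ shows that along any future-directed causal curve $u$ is non-decreasing and $v$ is non-increasing. Hence the causal relation becomes the product-type order $p \leq q \iff u_p \leq u_q$ and $v_p \geq v_q$, and a causal automorphism $F$ is exactly a bijection of $\mathbb{R}^2$ preserving this order in both directions. Writing the target null coordinates as $U = X+T$ and $V = X-T$ with $(X,T)=F(x,t)$, everything reduces to understanding how $F$ moves the two families of null lines $\{u = c\}$ and $\{v = c\}$.

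The heart of the argument, and the step I expect to be the main obstacle, is an intrinsic order-theoretic description of the null lines, forcing $F$ to permute them. I would prove that the null lines are exactly the \emph{order-convex maximal chains}: a subset $C$ that is totally ordered, maximal among chains, and order-convex (if $p,q \in C$ and $p \leq r \leq q$ then $r \in C$). One direction is a direct check that each $\{u=c\}$ or $\{v=c\}$ has these three properties. For the converse, the key observation is that for comparable $p \leq q$ the order-interval $[p,q]$ is the rectangle $[u_p,u_q]\times[v_q,v_p]$, which fails to be a chain unless one coordinate agrees; so order-convexity together with the chain condition forces a single coordinate to be constant along $C$, and a short consistency argument rules out mixing the two families, since a pair with both coordinates strictly ordered would reappear and contradict the chain condition. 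Maximality then upgrades $C$ to a full line. As ``order-convex maximal chain'' is phrased entirely through $\leq$, the automorphism $F$ carries null lines to null lines.

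Next I would separate the two families. Two null lines of the same family are disjoint, whereas two lines of different families meet in exactly one point; since $F$ preserves incidence, the disjointness relation is preserved, so $F$ either preserves each family or interchanges them, and these alternatives produce the increasing and decreasing cases respectively. In the family-preserving case the value $U$ at $F(p)$ depends only on $u(p)$ and $V$ only on $v(p)$, so $U = \varphi(u)$ and $V = \psi(v)$ for bijections $\varphi,\psi$ of $\mathbb{R}$; testing order preservation first with $v$ fixed and then with $u$ fixed forces both $\varphi$ and $\psi$ to be increasing, and an increasing bijection of $\mathbb{R}$ is automatically a homeomorphism. Substituting $X = \frac{1}{2}(U+V)$ and $T=\frac{1}{2}(U-V)$ recovers the first displayed formula. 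In the family-swapping case the same reasoning gives $V = \varphi(u)$ and $U = \psi(v)$ with $\varphi,\psi$ decreasing, and relabeling yields the second formula.

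Finally, uniqueness is immediate because $\varphi$ and $\psi$ are read off as the induced maps $U=\varphi(u)$, $V=\psi(v)$ (resp. the swapped relations), hence are determined by $F$; and the two cases are mutually exclusive since whether $F$ preserves or swaps the families is intrinsic. The converse is a one-line verification: given monotone homeomorphisms $\varphi,\psi$ of the prescribed type, the displayed $F$ satisfies $U=\varphi(u)$, $V=\psi(v)$ (resp. the swapped relations), which is manifestly an order bijection and therefore a causal automorphism. I would present the second step with care, as it is where the special low-dimensional phenomenon lives; the remaining steps are bookkeeping once the null lines are pinned down order-theoretically.
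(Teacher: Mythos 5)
There is nothing in the paper to compare your argument against: the paper does not prove this theorem, but imports it verbatim as Theorem 2.2 of \cite{CQG4}, so whether your route coincides with the original one cannot be judged from the paper itself. Your proposal is therefore doing strictly more than the paper does, and, as far as I can check, it is correct and complete as a self-contained proof. The coordinate translation is right ($p \leq q$ iff $u_p \leq u_q$ and $v_p \geq v_q$ in null coordinates, so a causal automorphism is exactly an order automorphism), and your key lemma---that the null lines are exactly the order-convex maximal chains---holds with precisely the hypotheses you list. It is worth emphasizing that order-convexity is not cosmetic there: timelike lines such as $\{v=-u\}$ are also maximal chains for this order, so maximality alone would not pin down the null lines; it is your rectangle observation, that $[p,q]=[u_p,u_q]\times[v_q,v_p]$ contains incomparable pairs as soon as both coordinates are strictly ordered, that kills all non-null order-convex chains, and your three-point consistency argument correctly rules out a single chain mixing the two coordinate families. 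The remaining steps---preservation of incidence separating the family-preserving (increasing) and family-swapping (decreasing) cases, monotonicity of the induced maps $\varphi$ and $\psi$, the fact that a monotone bijection of $\mathbb{R}$ is automatically a homeomorphism, uniqueness, and the one-line converse---are all sound. A feature of your route worth keeping explicit is that it never assumes continuity of $F$, only the order-isomorphism property, which matches the paper's definition of causal automorphism and yields continuity of $F$ as a conclusion rather than a hypothesis.
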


We can improve the above theorem in such a way that any causal
isomorphism from an open subset of $\mathbb{R}^2_1$ that contains
$x-$axis as a Cauchy surface onto another open subset of
$\mathbb{R}^2_1$ that contains $x-$axis as a Cauchy surface can be
represented as the above theorem. This is essentially the same as
Theorem 4.3 in \cite{JGP1} and from Theorem \ref{imbedding}, we
have the following theorem.

\begin{thm} \label{JGP}
Let $M$ be a two-dimensional spacetime with non-compact Cauchy
surfaces. Then, the group of all causal automorphisms of $M$ is
isomorphic to a subgroup of the group of all causal automorphisms
of $\mathbb{R}^2_1$.
\end{thm}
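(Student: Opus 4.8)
The plan is to transport automorphisms of $M$ to $\mathbb{R}^2_1$ through the causal imbedding of Theorem \ref{imbedding} and to read off their global form from Theorem \ref{auto}. First I would fix, once and for all, a causal-isomorphic imbedding $\iota : M \to \mathbb{R}^2_1$ whose image $U = \iota(M)$ is an open set containing the $x$-axis as a Cauchy surface; this is exactly what Theorem \ref{imbedding}, together with the discussion preceding this statement, provides. Then for a causal automorphism $f : M \to M$ the conjugate $g_f := \iota \circ f \circ \iota^{-1}$ is a causal isomorphism of $U$ onto $U$, since $\iota$ and $f$ are causal isomorphisms and $f$ is a bijection of $M$.

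Next I would invoke the improved form of Theorem \ref{auto} recorded just above (essentially Theorem 4.3 of \cite{JGP1}): every causal isomorphism between open subsets of $\mathbb{R}^2_1$ that contain the $x$-axis as a Cauchy surface is the restriction of a causal automorphism of all of $\mathbb{R}^2_1$ of the form displayed in Theorem \ref{auto}. Applying this to $g_f$ yields a causal automorphism $F_f$ of $\mathbb{R}^2_1$ with $F_f|_U = g_f$, and I would set $\Phi(f) = F_f$. Well-definedness rests on a uniqueness lemma I would isolate first: a causal automorphism of $\mathbb{R}^2_1$ is determined by its restriction to any open set containing the $x$-axis. In the null coordinates $u = x+t$, $v = x-t$ the maps of Theorem \ref{auto} read $(u,v)\mapsto(\varphi(u),\psi(v))$ in the increasing case and $(u,v)\mapsto(\varphi(v),\psi(u))$ in the decreasing case; since the $x$-axis is the diagonal $\{(s,s)\}$ and $U$ is open, the values of $F_f$ on $U$ already recover $\varphi$ and $\psi$ on all of $\mathbb{R}$ and also pin down the type (an increasing and a decreasing map cannot agree on an open set, as that would force $\varphi(u_0)=\varphi(v_0)$ for some $u_0\neq v_0$, contradicting injectivity). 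Hence $F_f$ is unique.

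With this lemma the remaining verifications are routine. For the homomorphism property I would observe $g_{f_1\circ f_2} = g_{f_1}\circ g_{f_2}$ directly from the definition of conjugation, so that $\Phi(f_1)\circ\Phi(f_2)$ and $\Phi(f_1\circ f_2)$ are two causal automorphisms of $\mathbb{R}^2_1$ agreeing on $U$; by the uniqueness lemma they coincide, giving $\Phi(f_1\circ f_2)=\Phi(f_1)\circ\Phi(f_2)$. For injectivity, $\Phi(f)=\Phi(f')$ forces $g_f = g_{f'}$ on $U$, whence $f=f'$ because $\iota$ is a bijection onto $U$. Thus $\Phi$ is an injective homomorphism and $Aut(M)$ is isomorphic to its image, a subgroup of $Aut(\mathbb{R}^2_1)$.

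I expect the only genuine difficulty to lie in the extension step — that a causal isomorphism of the open set $U$ is the restriction of a \emph{global} causal automorphism rather than merely being locally of the stated form. This is precisely where the Cauchy-surface hypothesis is essential, and it is the content cited as Theorem 4.3 of \cite{JGP1}; everything else is bookkeeping around the conjugation $f\mapsto \iota\circ f\circ \iota^{-1}$ and the uniqueness lemma.
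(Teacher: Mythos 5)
Your proposal is correct and follows exactly the route the paper intends: the paper's ``proof'' is only a citation of Theorem 4.5 in \cite{JGP1}, but the remark preceding the statement (imbed $M$ into $\mathbb{R}^2_1$ via Theorem \ref{imbedding}, then represent causal isomorphisms between open subsets containing the $x$-axis by the improved Theorem \ref{auto}) is precisely the conjugation-plus-extension argument you spell out, with the uniqueness of the extension supplying well-definedness and the homomorphism property. The only blemish is the parenthetical justification in your uniqueness lemma, which is garbled as stated; the correct (and easy) argument is that if an increasing-type and a decreasing-type map agreed on an open set, the first component would depend only on $u$ and only on $v$ simultaneously, forcing an injective homeomorphism to be locally constant (or, even more simply, agreement on the diagonal alone forces $\varphi_1=\varphi_2$ pointwise, contradicting that one is increasing and the other decreasing).
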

\begin{proof}
This is Theorem 4.5 in \cite{JGP1}.
\end{proof}

The proof of Theorem \ref{imbedding} tells us that we can consider
a two-dimensional spacetime $M$ with non-compact Cauchy surfaces
as a subset of $\mathbb{R}^2_1$ which contains the $x-$axis as a
Cauchy surface, and then we only need to study subgroups of
$Aut(\mathbb{R}^2_1)$ since $Aut(M)$ is isomorphic to a subgroup
of $Aut(\mathbb{R}^2_1)$ by Theorem \ref{JGP}.

We must note that, when we consider $Aut(M)$ as a subgroup of
$Aut(\mathbb{R}^2_1)$, the subgroup depends on the choice of
homeomorphism used in Theorem \ref{imbedding}, and so we need to
clarify the dependence of the choice.

\begin{proposition}
Let $M$ be a two-dimensional spacetime with a non-compact Cauchy
surface $\Sigma$. Let $f$ and $g$ be homeomorphisms from $\Sigma$
onto $\{ (x,0) \,\, | \,\, x \in \mathbb{R} \}$ and let $i_f$,
$i_g : M \hookrightarrow \mathbb{R}^2_1$ be causally isomorphic
imbeddings given in Theorem \ref{imbedding}. Then, $Aut(i_f(M))$
and $Aut(i_g(M))$ are conjugate in $Aut(\mathbb{R}^2_1)$
\end{proposition}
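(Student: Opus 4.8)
The plan is to manufacture a single causal automorphism $H$ of $\mathbb{R}^2_1$ that carries the imbedded copy $i_f(M)$ onto $i_g(M)$, and then to obtain the conjugacy by conjugating with $H$. First I would form the composite $h := i_g \circ i_f^{-1} : i_f(M) \rightarrow i_g(M)$. Since $i_f$ and $i_g$ are causally isomorphic imbeddings (Theorem \ref{imbedding}), $h$ is a causal isomorphism between two open subsets of $\mathbb{R}^2_1$, each containing the $x$-axis as a Cauchy surface. Because $i_f$ extends $f$ and $i_g$ extends $g$, we have $i_f(\Sigma) = i_g(\Sigma) = \{(x,0) \mid x \in \mathbb{R}\}$, so $h$ restricts on the $x$-axis to $g \circ f^{-1}$ and, in particular, maps the $x$-axis onto itself.

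Next I would invoke the improvement of Theorem \ref{auto} mentioned in the text (the refinement to causal isomorphisms between open subsets that carry the $x$-axis as a Cauchy surface, essentially Theorem 4.3 in \cite{JGP1}) to represent $h$ by a pair of monotone homeomorphisms $\varphi, \psi$ of $\mathbb{R}$ through the formula of Theorem \ref{auto}. That same formula, now read on all of $\mathbb{R}^2_1$, defines by the converse half of Theorem \ref{auto} a global causal automorphism $H \in Aut(\mathbb{R}^2_1)$ with $H|_{i_f(M)} = h$. Consequently $H(i_f(M)) = h(i_f(M)) = i_g(M)$, and likewise $H^{-1}(i_g(M)) = i_f(M)$.

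Finally I would identify $Aut(i_f(M))$ with the stabilizer subgroup $G_f := \{ F \in Aut(\mathbb{R}^2_1) \mid F(i_f(M)) = i_f(M) \}$, and $Aut(i_g(M))$ with $G_g$, via the extension supplied by the improved theorem: every causal automorphism of $i_f(M)$ extends to a global one preserving $i_f(M)$, and every element of $G_f$ restricts to a causal automorphism of $i_f(M)$. With this identification, conjugation by $H$ gives $H\,G_f\,H^{-1} = G_g$: for $F \in G_f$, the automorphism $H F H^{-1}$ maps $i_g(M) = H(i_f(M))$ onto itself, so it lies in $G_g$, and the reverse inclusion follows symmetrically using $H^{-1}(i_g(M)) = i_f(M)$. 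This is exactly the asserted conjugacy in $Aut(\mathbb{R}^2_1)$.

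The main obstacle is the identification in the last step, namely that $Aut(i_f(M))$ coincides, through unique extension, with the stabilizer subgroup $G_f$ of $Aut(\mathbb{R}^2_1)$. Existence of the extension is the improved theorem, but I expect the delicate point to be its \emph{uniqueness}, which makes restriction and extension mutually inverse group homomorphisms: this rests on the fact that the null-coordinate data $(\varphi,\psi)$ of a causal automorphism are already pinned down on all of $\mathbb{R}$ by the map's behaviour on the Cauchy surface $\{(x,0)\}$, so that an automorphism of the open set $i_f(M)$ cannot have two distinct global extensions. Once this is in place, the homomorphism property (the extension of $\alpha \circ \beta$ being the composite of the extensions, by uniqueness) and the verification that $H$ realizes the conjugacy are routine.
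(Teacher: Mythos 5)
Your proposal is correct and follows essentially the same route as the paper: the paper likewise extends $i_g \circ i_f^{-1}$ to a (unique) causal automorphism $F$ of $\mathbb{R}^2_1$ with $F(i_f(M)) = i_g(M)$, using the refinement of Theorem \ref{auto} to causal isomorphisms between open subsets containing the $x$-axis as a Cauchy surface, and then concludes $Aut(i_g(M)) = F\,Aut(i_f(M))\,F^{-1}$. Your additional discussion of the extension/restriction identification of $Aut(i_f(M))$ with a stabilizer subgroup, and of the uniqueness of the extension, just makes explicit what the paper leaves implicit.
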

\begin{proof}
By the remark preceding Theorem \ref{JGP}, there exists a unique
causal automorphism $F$ on $\mathbb{R}^2_1$ such that $F(i_f(M)) =
i_g(M)$. Then, we have $Aut(i_g(M)) = F Aut(i_f(M)) F^{-1}$.
\end{proof}

In the following, we analyze causal structures of two-dimensional
spacetimes with compact Cauchy surfaces. Two-dimensional de Sitter
spacetime and Einstein static universe fall into this category and
we can apply the argument developed in this section to those
spacetimes and any globally hyperbolic proper subset of those
spacetimes. The causal structures of higher dimensional de Sitter
spacetime and Einstein's static universe are analyzed in
\cite{Lester}. Since de Sitter spacetime and Einstein's static
universe are homeomorphic to each other, the result shows a
characteristic differences between two-dimensional and higher
dimensional spacetimes.

 We now analyze the structure of $A$ when $M$ is a two-dimensional
spacetime with compact Cauchy surfaces since $A$ plays a central
role of causal or conformal structure of $M$. To obtain $A$, since
$A$ is a normalizer of $D$, we can use the group structure of
$Aut(\mathbb{R}^2_1)$ which is given in \cite{CQG4}. However, it
is easy to use the definition of $A$, which is given in
Proposition \ref{subgroup}.

 Since the Cauchy
surface of $M$ is homeomorphic to $S^1$, $M$ is diffeomorphic to
$S^1 \times \mathbb{R}$ by Theorem 1 in \cite{Bernal1} and so we
can use the universal covering $\pi : \overline{M} \rightarrow S^1
\times \mathbb{R}$ defined by $(x,t) \mapsto (e^{2\pi ix}, t)$,
where $\overline{M}$ is diffeomorphic to $\mathbb{R} \times
\mathbb{R}$. We must note that by Theorem \ref{iso},
$\overline{M}$ can be obtained uniquely up to causal isomorphims.
Since $M$ is globally hyperbolic, its universal covering space
$\overline{M}$ is globally hyperbolic with non-compact Cauchy
surfaces by Theorem 14 in \cite{VR} or Theorem 2.1 in \cite{JMP1}.
Then, by Theorem \ref{imbedding}, we can imbed $\overline{M}$ into
$\mathbb{R}^2_1$ causally isomorphically and so we now consider
$\overline{M}$ as a globally hyperbolic subset of $\mathbb{R}^2_1$
which has $x$-axis as a Cauchy surface.

Choose $\overline{g} \in A$. Then by the remark following Theorem
\ref{auto}, there exist homeomorphisms $\varphi$ and $\psi$ on
$\mathbb{R}$, which are either both increasing or both decreasing,
such that, if they are increasing, we have $\overline{g}(x,t) =
\frac{1}{2} \big( \varphi(x+t) + \psi(x-t), \varphi(x+t) -
\psi(x-t) \big)$, and if they are decreasing, we have
$\overline{g}(x,t) = \frac{1}{2} \big( \varphi(x-t) + \psi(x+t),
\varphi(x-t) - \psi(x+t) \big)$. It must be noted that the domain
of $\varphi$ and $\psi$ depend on the structure of $\overline{M}$.

We first analyze the structure of $D$ which is a normal subgroup
of $A$. Let $\Phi \in D$ be given by $(x,t) \mapsto
\big(\alpha(x,t) , \beta(x,t) \big)$. Then, since $\pi \circ \Phi
=\pi$, we must have $\big( e^{2\pi i \alpha}, \beta \big) =
\big(e^{2\pi ix}, t \big)$ and thus $\alpha(x,t) = x+m$ for some
$m \in \mathbb{Z}$ and $\beta(x,t) = t$. Therefore, we have
$\Phi(x,t) = (x+m,t)$ for some $m \in \mathbb{Z}$. In null
coordinates, $u=x+t$ and $v=x-t$, we have $\Phi(u,v)=(u+m, v+m)$
for some $m \in \mathbb{Z}$. This is the way in which $D \approx
\mathbb{Z}$ acts in $Aut(\mathbb{R}^2_1)$ or $A$.

We now analyze the structure of $A$. Given $\overline{g} \in A$,
we assume that $\varphi$ and $\psi$ are both increasing. Then
there exists $g \in Aut(M)$ such that $\pi \circ \overline{g} = g
\circ \pi$ and we have\\
$$g(e^{2\pi ix}, t) = \Big( e^{\pi i \{\varphi(u)+\psi(v)\}},
\frac{1}{2} ( \varphi(u) - \psi(v) ) \Big)$$ where $u = x+t$ and
$v = x-t$ are null coordinates.

Since $\overline{g}$ is in $A$, $g$ must be well-defined and so we
must have that, for any given $n \in \mathbb{Z}$, there exists $m
\in \mathbb{Z}$ such that
\begin{eqnarray*}
 \varphi(u+n) +
\psi(v+n) &=& \varphi(u)+\psi(v)+m, \mbox{\,\, and}\\
\varphi(u+n) - \psi(v+n) &=& \varphi(u) - \psi(v).
\end{eqnarray*}

These two equations are equivalent to
\begin{eqnarray*}
\varphi(u+n)-\varphi(u) &=& \frac{m}{2} \\
\psi(v+n) - \psi(v) &=& \frac{m}{2}.
\end{eqnarray*}

Conversely, it is easy to see that if two homeomorphisms $\varphi$
and $\psi$ satisfy the above two equations, then $\overline{g} \in
A$ can be obtained.

If we apply exactly the same argument to the case in which both
$\varphi$ and $\psi$ are decreasing, we get the same results and
we have the following theorem.

\begin{thm} \label{form}
Let $M$ be a two-dimensional spacetime with compact Cauchy
surfaces and $\pi : \overline{M} \rightarrow M$ be a universal
covering map. Then, we have the following.\\
(1) The group of covering transformation $D$ consists of those
functions $\Phi$ given by $\Phi(u,v) = (u+m, v+m)$ in null
coordinates. The group $A$ consists of pairs of two homeomorphisms
$(\varphi, \psi) \in Aut(\overline{M})$ on $\mathbb{R}$ that
satisfy the condition : for any $n \in \mathbb{Z}$, there exists
$m \in \mathbb{Z}$ such that $f(x+n) -
f(x) = \frac{m}{2}$ for all $x$.\\
(2) The general form of causal automorphism on $M$ is given by
$$g(e^{2\pi ix}, t) = \Big( e^{\pi i \{\varphi(u)+\psi(v)\}},
\frac{1}{2} ( \varphi(u) - \psi(v) ) \Big)$$ where $\varphi$ and
$\psi$ are given from (1).
\end{thm}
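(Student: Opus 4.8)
The plan is to assemble the statement from the computations set up in the paragraphs immediately preceding it, organized into three pieces: the explicit description of the deck group $D$, the descent condition that cuts out $A$ inside $Aut(\overline{M})$, and the resulting formula for the induced map $g$ on $M$. All of the analytic content is already furnished by Theorem \ref{auto}, which parametrizes the causal automorphisms of $\mathbb{R}^2_1$ by pairs of simultaneously monotone homeomorphisms, together with the remark extending it to causal isomorphisms of open subsets containing the $x$-axis; hence the argument is essentially bookkeeping carried out in the null coordinates $u = x+t$, $v = x-t$.

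First I would determine $D$. A deck transformation $\Phi(x,t) = (\alpha(x,t), \beta(x,t))$ satisfies $\pi \circ \Phi = \pi$, and since $\pi(x,t) = (e^{2\pi i x}, t)$ this forces $e^{2\pi i \alpha} = e^{2\pi i x}$ and $\beta = t$. The quantity $\alpha(x,t) - x$ is then a continuous integer-valued function on the connected space $\overline{M}$, hence a constant $m \in \mathbb{Z}$, so $\Phi(x,t) = (x+m, t)$; passing to null coordinates gives $\Phi(u,v) = (u+m, v+m)$. Thus $D$ is the copy of $\mathbb{Z}$ acting by diagonal integer null translations, establishing the $D$-part of (1). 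This description is what feeds the analysis of $A$, since two preimages of a point of $M$ differ precisely by such a translation.

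The crux is the functional-equation characterization of $A$. Because $\overline{M}$ is globally hyperbolic with a non-compact Cauchy surface, Theorem \ref{imbedding} lets me regard $\overline{M} \subset \mathbb{R}^2_1$ with the $x$-axis as Cauchy surface, so any $\overline{g} \in A \subset Aut(\overline{M})$ is given, in null coordinates, by $(u,v) \mapsto (\varphi(u), \psi(v))$ for a pair of homeomorphisms that are both increasing or both decreasing. By the definition of $A$ in Proposition \ref{subgroup}, $\overline{g}$ must descend to a well-defined $g$ with $\pi \circ \overline{g} = g \circ \pi$; writing out the candidate value $g(e^{2\pi i x}, t) = \pi(\overline{g}(x,t))$ and demanding that it be independent of the chosen preimage---two of which differ by $u \mapsto u+n$, $v \mapsto v+n$ by the previous paragraph---forces, for each $n$, the existence of $m \in \mathbb{Z}$ with $\varphi(u+n) + \psi(v+n) = \varphi(u) + \psi(v) + m$ and $\varphi(u+n) - \psi(v+n) = \varphi(u) - \psi(v)$. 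Adding and subtracting these two relations decouples them into $\varphi(u+n) - \varphi(u) = m/2$ and $\psi(v+n) - \psi(v) = m/2$, which is exactly the condition in (1). The converse is immediate by running the same computation backwards, and the decreasing case is identical after interchanging the roles of $u$ and $v$ dictated by Theorem \ref{auto}; this equivalence is also the concrete incarnation of the normality of $D$ in $A$ recorded in Proposition \ref{normal}.

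Finally, part (2) is simply the formula produced along the way: substituting $\overline{g}$ into $\pi \circ \overline{g}$ and using $\pi(X,T) = (e^{2\pi i X}, T)$ yields $g(e^{2\pi i x}, t) = \big( e^{\pi i \{\varphi(u)+\psi(v)\}}, \frac{1}{2}(\varphi(u)-\psi(v)) \big)$. The main obstacle I anticipate is not conceptual but organizational: one must check that a single integer $m$ serves simultaneously for both $\varphi$ and $\psi$ at each $n$ (this is what genuinely couples the two otherwise independent homeomorphisms), and that the condition is read with the quantifiers in the order ``for every $n$ there exists $m$'' rather than the reverse.
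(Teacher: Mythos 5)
Your proposal is correct and follows essentially the same route as the paper's own proof: the same computation of $D$ from $\pi \circ \Phi = \pi$, the same descent (well-definedness) analysis in null coordinates producing the coupled functional equations, the same decoupling by adding and subtracting, and the same read-off of the formula for $g$, with the converse and the decreasing case dispatched exactly as in the paper. One caveat you share with the paper: equality of the first components $e^{\pi i\{\cdot\}}$ actually forces the shift in the first equation to be an \emph{even} integer $2m$, so the decoupled condition should read $f(x+n)-f(x)=m\in\mathbb{Z}$ rather than $\frac{m}{2}$ (and for the descended map to be injective, hence an automorphism, one further needs $m=\pm n$, i.e.\ $\varphi,\psi$ are lifts of degree-$\pm 1$ circle homeomorphisms); this factor-of-two looseness is inherited from the paper's own derivation, not introduced by you.
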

\begin{proof}
The first part has been proved and for the second part, we note
that for any $g \in Aut(M)$, we can find $\overline{g} \in
Aut(\overline{M})$ such that $\pi \circ \overline{g} = g \circ
\pi$.
\end{proof}

From the above theorem, we can see that, regardless of their
increasing behavior, for $(\varphi, \psi)$ to be in $A$, it is
necessary and sufficient to satisfy $f(x+n) - f(x) = \frac{m}{2}$.
The increasing behavior of $\varphi$ and $\psi$ can be read from
the above equation. For example, if they are increasing, $n > 0$
implies that $m > 0$. and if they are decreasing $n > 0$ implies
that $m < 0$.

 Therefore, by continuity, it is sufficient
to specify values of $\varphi$ and $\psi$ on a interval $(0, n)$.
For simplicity, if we take $n=1$, then $\varphi$ and $\psi$ are
completely determined by specifying bijections from $(0,1)$ onto
an interval whose length is an half integer. We also remark that
the condition that $f(x+n) - f(x) = \frac{m}{2}$ is equivalent to
the condition that $f(1)-f(0) = \frac{m^\prime}{2}$ and
$f(x+n^\prime)-f(x)=\frac{m^\prime n^\prime}{2}$ for some
$m^\prime \in \mathbb{Z}$ and for all $x \in \mathbb{R}$ and
$n^\prime \in \mathbb{Z}$. It must be also noted that the number
$m^\prime$ is inherited from the causal structure of $M$.

In \cite{Lester}, causal structure of high dimensional de Sitter
spacetime and Einstein's static universe are analyzed and as
expected, they exhibit quite different behaviors from that
two-dimensional spacetimes with compact Cauchy surfaces. We also
remark that, as the above theorem shows, in two-dimensional
spacetimes with compact Cauchy surfaces, the general form of
causal automorphisms of such spacetimes has the same pattern.
Also, their causal automorphism group have similar patterns as the
following theorem shows.

\begin{thm}
Let $M$ be a two-dimensional spacetime with compact Cauchy
surfaces. Then, there exists two-dimensional spacetime
$\overline{M}$ with non-compact Cauchy surfaces of which the group
of causal automorphisms contains $D$ as a subgroup such that
$Aut(M)$ is isomorphic to $A/D$ where $A$ is a normalizer of $D$
in $Aut(\overline{M})$. Conversely, given two-dimensional
spacetime $\overline{M}$ with non-compact Cauchy surfaces, if the
group of causal automorphisms of $\overline{M}$ contains $D$ as a
subgroup, then there exists two-dimensional spacetime $M$ with
compact Cauchy surfaces such that $Aut(M)$ is isomorphic to $A/D$.
\end{thm}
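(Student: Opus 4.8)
The plan is to treat the two directions separately. The forward direction is essentially an assembly of the structure theory already developed, while the converse requires constructing the compact-Cauchy-surface spacetime explicitly as a quotient. For the forward direction, I would start with a two-dimensional spacetime $M$ whose Cauchy surface is homeomorphic to $S^1$. By Theorem~1 in \cite{Bernal1}, $M$ is diffeomorphic to $S^1 \times \mathbb{R}$, so its universal covering space $\overline{M}$ is diffeomorphic to $\mathbb{R}^2$, and by Theorem~14 in \cite{VR} (or Theorem~2.1 in \cite{JMP1}) it is globally hyperbolic with non-compact Cauchy surfaces, hence a spacetime of the required type. The covering transformation group $D \cong \pi_1(M) \cong \mathbb{Z}$ lies in $Aut(\overline{M})$, since every covering transformation is a lift of $Id_M$ and hence a causal automorphism by Theorem~\ref{iso}, and by Proposition~\ref{normal} it is normal in $A$. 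As $A = N(D)$ by Theorem~\ref{all}, Theorem~\ref{main} gives $Aut(M) \cong N(D)/D = A/D$ at once.

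For the converse, I would realize $M$ as the quotient $\overline{M}/D$. By Theorem~\ref{imbedding} I may regard $\overline{M}$ as a globally hyperbolic subset of $\mathbb{R}^2_1$ having the $x$-axis as a Cauchy surface, and by Theorem~\ref{form} the elements of $D$ are, in null coordinates $u = x+t$ and $v = x-t$, the translations $\Phi_m(u,v) = (u+m, v+m)$ with $m \in \mathbb{Z}$. In standard coordinates these are the spatial translations $(x,t) \mapsto (x+m, t)$, hence isometries of the Minkowski metric. Since only the causal structure of $\overline{M}$ enters into $Aut(\overline{M})$, $D$ and $A$, I may replace the metric of $\overline{M}$ by the Minkowski metric it inherits from the imbedding without altering any of these groups; with this metric $D$ acts by isometries. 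Because integer translations act freely and properly discontinuously, $M := \overline{M}/D$ is a smooth time-oriented Lorentzian manifold and $\pi : \overline{M} \rightarrow M$ is a covering map, indeed the universal covering since $\overline{M} \cong \mathbb{R}^2$ is simply connected, with covering group $D$.

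It then remains to check that $M$ has the stated properties and to read off its automorphism group. The $x$-axis of $\overline{M}$ is $D$-invariant and descends to $\mathbb{R}/\mathbb{Z} \cong S^1$, which is compact; global hyperbolicity of $\overline{M}$ together with the free, properly discontinuous, isometric action of $D$ shows that $M$ is globally hyperbolic with this $S^1$ as a Cauchy surface. With $M$ so constructed and $D$ its group of covering transformations, Theorem~\ref{main} (together with $A = N(D)$ from Theorem~\ref{all}) yields $Aut(M) \cong N(D)/D = A/D$, which completes the converse.

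The step I expect to be the main obstacle is the metric descent in the converse. A priori the elements of $D$ are only causal automorphisms, and in dimension two the causal structure does not determine the conformal class of the metric, so there is no reason that a Lorentzian metric should push down to $\overline{M}/D$. The point that unlocks the argument is the explicit form of $D$ supplied by Theorem~\ref{form}: its generators are null-coordinate translations, equivalently Minkowski spatial translations, so passing to the causally equivalent Minkowski metric renders the action isometric and the quotient metric well defined. A secondary matter requiring care is the compatibility of the two directions, namely that the $\overline{M}$ produced from $M$ in the forward direction agrees, up to causal isomorphism, with the one fed into the converse; this is guaranteed by the uniqueness of the universal cover up to causal isomorphism noted after Theorem~\ref{iso}.
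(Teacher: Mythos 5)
Your proposal is correct and follows essentially the same route as the paper: the forward direction is exactly the isomorphism $\Phi(f)=\overline{f}D$ of Theorem~\ref{main} combined with $A=N(D)$ (the paper simply re-runs that proof where you cite it), and the converse is the same quotient construction $M=\overline{M}/D$ after regarding $\overline{M}$, via Theorem~\ref{imbedding}, as a $D$-invariant globally hyperbolic subset of $\mathbb{R}^2_1$ with the $x$-axis as a Cauchy surface, so that $\pi(\Sigma)\cong S^1$ is a compact Cauchy surface of the quotient. Your explicit handling of the metric descent --- passing to the causally equivalent Minkowski metric so that the null-coordinate translations in $D$ act isometrically --- makes precise a step the paper leaves implicit (``we can make $M=\overline{M}/D$ into a spacetime''), but it is the same underlying argument, not a different route.
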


\begin{proof}

Let $\pi : \overline{M} \rightarrow M$ be a universal covering and
define $\Phi : Aut(M) \rightarrow A/D$ by $\Phi(f) = \overline{f}
D$ where $A$ is the group defined in Proposition \ref{subgroup}.
Then, by Proposition \ref{normal}, $A$ is the normalizer of $D$
and, by Theorem \ref{lift}, $\Phi$ is well-defined and it is
obvious that $\Phi$ is surjective. If $\overline{f}D =
\overline{g}D$, then there exists $\varphi \in D$ such that
$\overline{f} = \overline{g} \circ \varphi$. Therefore, from $f
\circ \pi = \pi \circ \overline{f} = \pi \circ \overline{g} \circ
\varphi = g \circ \pi \circ \varphi = g \circ \pi$, we have $f =
g$ and thus $\Phi$ is injective.

To show that $\Phi$ is a homomorphism, let $f$ and $g$ be in
$Aut(M)$ and $\overline{f}$ and $\overline{g}$ be lifts of $f$ and
$g$. Since $\pi \circ \overline{g} \circ \overline{f} = g \circ
\pi \circ \overline{f} = g \circ f \circ \pi$, $\overline{g} \circ
\overline{f}$ is a lift of $g \circ f$ and thus $\Phi(g \circ f) =
\Phi(g)\Phi(f)$. Therefore, $\Phi$ is an isomorphism.

We now prove the converse. Let $\overline{M}$ be a two-dimensional
spacetime with non-compact Cauchy surfaces. Then we can consider
$\overline{M}$ as an open subset of $\mathbb{R}^2_1$ that contains
$x$-axis as a Cauchy surface. Since $D$ acts freely and properly
continuously, we can make $M = \overline{M}/D$ into a spacetime in
such a way that the quotient map $\pi : \overline{M} \rightarrow
M$ is a time-orientation preserving covering map, where since
$\overline{M}$ is diffeomorphic to $\mathbb{R}^2$, $\overline{M}$
is a universal covering space of $M$. Furthermore, since $\Sigma =
\{ (x,0 \,\, | \,\, x \in \mathbb{R} \}$ is a Cauchy surface of
$\overline{M}$, $\pi(\Sigma)$, which is homeomorphic to $S^1$, is
a Cauchy surface of $M$. Then, the same argument as in the first
part shows that $\Phi : Aut(M) \rightarrow A/D$ is an isomorphism.

\end{proof}

Let $E$ be the two-dimensional Einstein's static universe. Then
$E$ is topologically $S^1 \times \mathbb{R}$ with the flat metric
$d\theta^2 - dt^2$ and thus its universal covering space is
$\mathbb{R}^2_1$. In the general form of causal automorphism $g$
given in Theorem \ref{form}, the homeomorphisms $\varphi$ and
$\psi$ are determined by the structure of $\overline{M}$. Since
$\overline{M}$ is a subset of $\mathbb{R}^2_1$, we can see that,
for any two-dimensional spacetime $M$ with compact Cauchy
surfaces, $Aut(M)$ is a subgroup of $Aut(E)$. We can also prove
this by use of Theorem \ref{main} as follows.

\begin{thm}
Let $M$ be a two-dimensional spacetime with compact Cauchy
surfaces. Then, $Aut(M)$ is isomorphic to a subgroup of $Aut(E)$.
\end{thm}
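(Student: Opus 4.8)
The plan is to invoke Theorem \ref{main} for both $M$ and $E$ and then to compare the resulting normalizers inside $Aut(\mathbb{R}^2_1)$. The space $E$ is $S^1\times\mathbb{R}$ with universal covering $\mathbb{R}^2_1$ and covering map $(x,t)\mapsto(e^{2\pi i x},t)$, so Theorem \ref{main} gives $Aut(E)\cong A_E/D_E$, where $D_E$ is the deck group and $A_E=N(D_E)$ is its normalizer in $Aut(\mathbb{R}^2_1)$; by the computation in Theorem \ref{form}, $D_E$ acts by the diagonal translations $(u,v)\mapsto(u+m,v+m)$ in null coordinates. For $M$, its universal covering $\overline{M}$ is a two-dimensional spacetime with non-compact Cauchy surfaces, and Theorem \ref{main} gives $Aut(M)\cong A/D$, where $D$ is the deck group of $\pi:\overline{M}\to M$ and $A=N(D)$ is its normalizer in $Aut(\overline{M})$; again $D$ acts by the same diagonal translations. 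Thus $D$ and $D_E$ are given by one and the same formula, and the whole problem reduces to embedding $A/D$ into $A_E/D_E$.

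First I would realize $\overline{M}$ as a $D$-invariant subset of $\mathbb{R}^2_1$ containing the $x$-axis as a Cauchy surface, by Theorem \ref{imbedding}. By Theorem \ref{JGP} this produces an injective homomorphism $\iota:Aut(\overline{M})\hookrightarrow Aut(\mathbb{R}^2_1)$, concretely the representation of each causal automorphism through its pair $(\varphi,\psi)$ of homeomorphisms of $\mathbb{R}$ as in Theorem \ref{auto}. Since the diagonal translation $(u,v)\mapsto(u+m,v+m)$ of $\overline{M}$ is the restriction of the corresponding deck transformation of $\mathbb{R}^2_1$, the map $\iota$ carries $D$ isomorphically onto $D_E$. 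Because $\iota$ is a homomorphism, $\iota(N(D))\subseteq N(\iota(D))=A_E$, so $\iota(A)\subseteq A_E$; and because $\iota$ is injective with $\iota(D)=D_E$, it descends to an injection $A/D\hookrightarrow A_E/D_E$. Composing with the isomorphisms of Theorem \ref{main} then yields the desired embedding of $Aut(M)$ into $Aut(E)$.

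The point demanding care is that $\iota(A)$ genuinely lands in the full normalizer $A_E=N(D_E)$ inside $Aut(\mathbb{R}^2_1)$, rather than merely in the normalizer taken within the subgroup $\iota\big(Aut(\overline{M})\big)$. Concretely, an element of $A$ is a pair $(\varphi,\psi)$ satisfying the condition of Theorem \ref{form}; the presence of the complete $x$-axis as a Cauchy surface forces $\varphi$ and $\psi$ to be homeomorphisms of all of $\mathbb{R}$, so by the converse part of Theorem \ref{auto} this same pair defines a bona fide causal automorphism of $\mathbb{R}^2_1$, and the condition of Theorem \ref{form} is exactly the condition placing it in $N(D_E)$ because $D=D_E$. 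I expect this verification — that the normalizer condition on $\overline{M}$ transfers verbatim to $\mathbb{R}^2_1$ thanks to the $D$-invariance of $\overline{M}$ and the shared deck group — to be the only substantive step, the remainder being formal quotient-group bookkeeping.
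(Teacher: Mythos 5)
Your proposal is correct and takes essentially the same route as the paper: both apply Theorem \ref{main} to write $Aut(M)\cong N_M/D$ and $Aut(E)\cong N_E/D$, use the inclusion $Aut(\overline{M})\le Aut(\mathbb{R}^2_1)$ coming from realizing $\overline{M}$ as a subset of $\mathbb{R}^2_1$, and conclude via the induced map on quotients, which is injective because its kernel is $D$. The only difference is presentational: you make the embedding $\iota$, the identification $\iota(D)=D_E$, and the containment of $\iota(N(D))$ in the full normalizer $N(D_E)$ explicit, whereas the paper asserts $N_M\le N_E$ in a single line.
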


\begin{proof}
Let $N_M$ and $N_E$ be normalizers of $D$ in $Aut(\overline{M})$
and $Aut(\overline{E}) = Aut(\mathbb{R}^2_1)$. Then, since
$Aut(\overline{M})$ is a subgroup of $Aut(\mathbb{R}^2_1)$, $N_M$
is a subgroup of $N_E$. Therefore, the homomorphism $f : N_M
\rightarrow N_E/D$ given by $f(g) = gD$ is well-defined and we
have $Ker(f) = D$. Therefore, the induced homomorphism $N_M/D
\rightarrow N_E/D$ is injective and thus $Aut(M) = N_M/D$ is
isomorphic to a subgroup of $N_E/D = Aut(E)$.
\end{proof}

\section{Conformal structure of two-dimensional spacetimes} \label{sectione:5}

In general, it is well-known that any causal isomorphism between
two Lorentzian manifolds is a smooth conformal diffeomorphism if
the dimension of manifolds are bigger than two. However, this is
not the case when the dimension is two. Even if a causal
automorphism is $C^\infty$, it is not necessarily a conformal
diffeomorphism. For example, if we take $\varphi = \psi = x^3$,
then the function $F$ defined as in Theorem \ref{auto} is
$C^\infty$ and causal automorphism on $\mathbb{R}^2_1$. However,
it is not a conformal diffeomorphism since its inverse is not
differentiable at $(0, 0)$. Therefore, if we want to get a
conformal diffeomorphism on $\mathbb{R}^2_1$ from causal
automorphism we need one more condition and we state the
corresponding result in two-dimensional spacetimes with
non-compact Cauchy surfaces.

\begin{lemma} \label{lemma-smooth}
Let $M$ and $N$ be two-dimensional spacetimes with non-compact
Cauchy surfaces and $F : M \rightarrow N$ be a causal isomorphism.
If both $F$ and $F^{-1}$ are $C^\infty$, then, $F$ is a $C^\infty$
conformal diffeomorphism.
\end{lemma}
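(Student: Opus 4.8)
The plan is to reduce the statement to a one-variable analysis of the homeomorphisms $\varphi$ and $\psi$ furnished by Theorem \ref{auto}, working in null coordinates where the map decouples. First I would invoke Theorem \ref{imbedding} to regard $M$ and $N$ as open subsets of $\mathbb{R}^2_1$, each containing the $x$-axis as a Cauchy surface, and then apply the extension of Theorem \ref{auto} recorded in the remark preceding Theorem \ref{JGP} to express $F$ through a pair of homeomorphisms $\varphi, \psi$ of intervals of $\mathbb{R}$, which are either both increasing or both decreasing. Since a causal isomorphism between such embedded regions has exactly the form of Theorem \ref{auto}, this representation is available here.

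Next I would pass to null coordinates $u=x+t$, $v=x-t$ on the domain and $U=X+T$, $V=X-T$ on the codomain. A direct computation shows that in the increasing case $F$ takes the decoupled form $(u,v)\mapsto(U,V)=(\varphi(u),\psi(v))$, while in the decreasing case it takes the form $(u,v)\mapsto(\varphi(v),\psi(u))$. In either case the Minkowski metric becomes $-dt^2+dx^2 = du\,dv$, so the metric on $N$ is again $dU\,dV$, and the problem reduces to pulling back $dU\,dV$ along a map that separates the two null variables.

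The smoothness hypotheses then feed in cleanly. Because $F$ is $C^\infty$ and a linear change to null coordinates preserves smoothness, its null-coordinate form $(u,v)\mapsto(\varphi(u),\psi(v))$ (or the swapped version) is $C^\infty$; restricting to lines of constant $v$, respectively constant $u$, shows that $\varphi$ and $\psi$ are themselves $C^\infty$. Applying the same observation to $F^{-1}$, whose null-coordinate form is governed by $\varphi^{-1}$ and $\psi^{-1}$, shows these inverses are $C^\infty$ as well. Hence $\varphi$ and $\psi$ are diffeomorphisms of intervals, so $\varphi'$ and $\psi'$ vanish nowhere; being monotone in the same sense, their product $\varphi'\psi'$ is everywhere positive. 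Computing $F^*(dU\,dV)=\varphi'\psi'\,du\,dv$ then exhibits $F$ as a conformal map with positive, smooth conformal factor $\Omega^2=\varphi'\psi'$, which is exactly the assertion.

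The main obstacle is the step where smoothness of the two-variable map $F$ must be converted into smoothness of the single-variable functions $\varphi$ and $\psi$, together with the parallel statement for $F^{-1}$; once the null-coordinate decoupling is in place this is routine, but it is essential to handle both the increasing and decreasing branches of Theorem \ref{auto} and to remember that the embedded regions are proper open subsets, so that $\varphi$ and $\psi$ are defined only on intervals rather than on all of $\mathbb{R}$. I expect no difficulty in the final metric computation; the content of the lemma lies entirely in using the smoothness of $F^{-1}$ to rule out the vanishing of $\varphi'$ or $\psi'$, precisely the phenomenon illustrated by the example $\varphi=\psi=x^3$ preceding the statement.
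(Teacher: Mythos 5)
Your reduction to null coordinates fails at its first step, because Theorem \ref{imbedding} supplies only a \emph{causal} isomorphism onto a subset of $\mathbb{R}^2_1$: it is a homeomorphism preserving the relation $\leq$, not a diffeomorphism and not a conformal map. This breaks the argument in two places. First, the smoothness hypothesis on $F$ refers to the given smooth structures of $M$ and $N$; after conjugating by the embeddings $i_M$, $i_N$, the map $\tilde F = i_N\circ F\circ i_M^{-1}$ between the Minkowski regions need not be $C^\infty$, so you cannot restrict to null lines and conclude that $\varphi$ and $\psi$ are smooth. Second, even if $\tilde F$ were smooth, your computation $\tilde F^*(dU\,dV)=\varphi'\psi'\,du\,dv$ establishes conformality with respect to the \emph{flat} metric on the images, which says nothing about the actual metrics $g_M$, $g_N$ unless the embeddings themselves are conformal. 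But the statement that the causal embedding can be chosen to be a smooth conformal diffeomorphism is precisely Theorem \ref{conimbedding}, which comes after this lemma and whose proof invokes it; using it here is circular.

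The paper's proof is intrinsic and never leaves $M$ and $N$. It reduces the claim to showing that $F_*$ sends null vectors to null vectors (Lemma 2.1 of \cite{Beem}), and verifies this causally: since the Cauchy surfaces are non-compact, null geodesics have no null cut points, so along a null geodesic one has $\gamma(0)\leq\gamma(t)$ but never $\gamma(0)\ll\gamma(t)$; $F$ preserves both $\leq$ and $\ll$, hence preserves this horismos relation, hence maps null geodesics to null pregeodesics, and smoothness of $F$ then forces $F_*(v)$ to be null. Applying the same argument to $F^{-1}$ completes the proof. Your closing observation --- that smoothness of $F^{-1}$ is what excludes degeneracies like $\varphi=\psi=x^3$ --- is a correct diagnosis in the flat model, but the lemma concerns arbitrary curved two-dimensional spacetimes, where that model is not yet legitimately available.
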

\begin{proof}
It suffices to show that $F_*$ sends null vectors to null vectors,
by Lemma 2.1 in \cite{Beem}. Let $v \in T_pM$ be a null vector and
let $\gamma$ be a future-directed null geodesic with $\gamma(0)=p$
and $\gamma^\prime(0) = v$. Then, since $M$ has non-compact Cauchy
surfaces, $\gamma$ has no null cut points, and so, for any $t >
0$, we have $\gamma(0) \leq \gamma(t)$ but not $\gamma(0) <<
\gamma(t)$. Since $F$ is a causal isomorphism, we have
$F(\gamma(0)) \leq F(\gamma(t))$ but not $F(\gamma(0)) <<
F(\gamma(t))$. Therefore, any future-directed causal curve from
$F(\gamma(0))$ to $F(\gamma(t))$ is a null pregeodesic. Since $F$
is a $C^\infty$ causal isomorphism, $F \circ \gamma$ is a
future-directed causal curve and thus $F \circ \gamma$ is a null
pregeodesic. Therefore, $F_*(v)$ is a null vector. Likewise, we
can apply the same argument to $F^{-1}$ to obtain the desired
result.
\end{proof}

Let $F : M \rightarrow N$ be an anti-causal isomorphism. When the
time-orientation of $N$ is given by a vector field $X$, if we
replace the time-orientation of $N$ by $-X$, the map $F$ becomes a
causal isomorphism. Since conformal map is irrelevant to
time-orientations, we have the following.

\begin{corollary}
Let $M$ and $N$ be two-dimensional spacetimes with non-compact
Cauchy surfaces and $F : M \rightarrow N$ be an anti-causal
isomorphism. If both $F$ and $F^{-1}$ are $C^\infty$, then, $F$ is
a $C^\infty$ conformal diffeomorphism.
\end{corollary}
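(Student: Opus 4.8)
The plan is to reduce the statement directly to Lemma \ref{lemma-smooth} by exploiting the fact that the conformal class of a Lorentzian metric carries no information about the choice of time-orientation. An anti-causal isomorphism differs from a causal isomorphism only in that it interchanges the roles of future and past, so reversing the time-orientation of the target should convert $F$ into an honest causal isomorphism without disturbing any of the smoothness or Cauchy-surface hypotheses.

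Concretely, I would first let $N'$ denote the Lorentzian manifold underlying $N$ but equipped with the opposite time-orientation: if $X$ is the vector field defining the time-orientation of $N$, replace it by $-X$, exactly as in the remark preceding the statement. The metric, the smooth structure, and the partition of tangent vectors into timelike, null and spacelike are all unchanged; only the labelling of the two causal cones as future and past is swapped. Consequently the causal relation of $N'$ is the reverse of that of $N$, i.e. $a \le_{N'} b$ if and only if $b \le_N a$. I would then check that $N'$ is again a two-dimensional spacetime with a non-compact Cauchy surface: a Cauchy surface is an achronal set met exactly once by every inextendible causal curve, and neither the family of causal curves nor the achronality condition is affected by reversing orientation, so the same hypersurface remains a Cauchy surface and its topology, in particular its non-compactness, is untouched.

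Next I would verify that $F : M \rightarrow N'$ is a causal isomorphism. By definition $F$ being anti-causal means $x \le y$ in $M$ if and only if $F(x) \ge F(y)$ in $N$; combining this with $a \le_{N'} b \iff b \le_N a$ gives $x \le y$ if and only if $F(x) \le_{N'} F(y)$, which is precisely the causal-isomorphism condition for $F$ viewed as a map into $N'$. Since $F$ and $F^{-1}$ are the same underlying smooth maps, they remain $C^\infty$, so Lemma \ref{lemma-smooth} applies to $F : M \rightarrow N'$ and yields that $F$ is a $C^\infty$ conformal diffeomorphism onto $N'$. Finally, because $N'$ and $N$ carry the identical metric, a conformal diffeomorphism onto $N'$ is verbatim a conformal diffeomorphism onto $N$, which completes the argument.

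I do not expect a genuine obstacle here: the content lies entirely in keeping the definitions straight, and the one point deserving a careful sentence is the invariance of the Cauchy-surface and global-hyperbolicity data under time-reversal, since Lemma \ref{lemma-smooth} is stated only for spacetimes with non-compact Cauchy surfaces and I must be certain that the reversed spacetime $N'$ still satisfies that hypothesis before invoking the lemma.
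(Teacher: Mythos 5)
Your proposal is correct and follows the same route the paper takes: the remark preceding the corollary in the paper reverses the time-orientation of $N$ (replacing $X$ by $-X$), turns $F$ into a causal isomorphism, and invokes Lemma \ref{lemma-smooth} together with the fact that conformality is insensitive to time-orientation. Your write-up merely makes explicit the points the paper leaves implicit, notably that the reversed spacetime still has a non-compact Cauchy surface, which is a worthwhile check but not a different argument.
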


 If $M$ is a two-dimensional spacetime with a non-compact Cauchy
 surface $\Sigma$, then $\Sigma$ is homeomorphic to $\mathbb{R}$.
 If we identify $\mathbb{R}$ and $\mathbb{R}_0 = \{ (x,0) \,\, |
 \,\, x \in \mathbb{R}\}$ which is a Cauchy surface of $\mathbb{R}^2_1$,
  we can choose a homeomorphism $f : \Sigma \rightarrow \mathbb{R}_0$.
  For given $p \in J^+(\Sigma)$, let $S_p = J^-(p) \cap \Sigma$. Then, since $M$ is globally
 hyperbolic, $S_p$ is compact and connected and thus $f(S_p)$ is
 also compact and connected subset of $\mathbb{R}_0$ and we can
 choose unique $q \in J^+(\mathbb{R}_0)$ such that $J^-(q) \cap
 \mathbb{R}_0 = f(S_p)$. In this way, we can extend $f$ to a map
 from $J^+(\Sigma)$ into $J^+(\mathbb{R}_0)$. Likewise, we can extend
 $f$ from $J^-(\Sigma)$ into $J^-(\mathbb{R}_0)$ and thus we have a
 map from $M$ into $\mathbb{R}^2_1$. It can be shown that this
 extended map is a causal isomorphism from $M$ into its image in
 $\mathbb{R}^2_1$ and that $\mathbb{R}_0$ is a Cauchy surface
 of the image of the extended map. This is the main idea of Theorem \ref{imbedding} and details
 of the argument can be found in \cite{CQG2}.

 In the above argument, if we take $f$ to be a $C^\infty$ diffeomorphism between
 $\Sigma$ and $\mathbb{R}_0$, then the extended map is a $C^\infty$ conformal
 diffeomorphism.

 \begin{thm} \label{conimbedding}
 Let $M$ be a two-dimensional spacetime with non-compact Cauchy
 surfaces. Then $M$ can be imbedded into $\mathbb{R}^2_1$ in such
 a way that the imbedding is a conformal diffeomorphism onto a
 globally hyperbolic subset of $\mathbb{R}^2_1$ that contains $x$-axis as a
 Cauchy surface.
 \end{thm}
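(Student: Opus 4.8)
The plan is to reuse the causal embedding of Theorem \ref{imbedding} but to feed it a smooth initial datum and then to upgrade the conclusion by Lemma \ref{lemma-smooth}. Concretely, I would begin with the homeomorphism $f : \Sigma \rightarrow \mathbb{R}_0$ from the discussion preceding the theorem, except that I now take $f$ to be a $C^\infty$ diffeomorphism, which is possible since $\Sigma$ is diffeomorphic to $\mathbb{R}$. By Theorem \ref{imbedding} the extended map $i_f : M \rightarrow \mathbb{R}^2_1$ is a causal isomorphism onto a globally hyperbolic subset $i_f(M)$ that contains $\mathbb{R}_0$ (the $x$-axis) as a Cauchy surface, so the statement reduces to showing that $i_f$ is a $C^\infty$ conformal diffeomorphism. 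Since Lemma \ref{lemma-smooth} turns a causal isomorphism with $C^\infty$ inverse into a conformal diffeomorphism, it suffices to prove that both $i_f$ and $i_f^{-1}$ are $C^\infty$.

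The heart of the argument is therefore the smoothness of $i_f$, and the tool I would use is a double-null coordinate system adapted to $\Sigma$. In a two-dimensional spacetime there are exactly two null directions at each point, and their integral curves give two transverse foliations by null pregeodesics. Because $M$ is globally hyperbolic with Cauchy surface $\Sigma$, every inextendible null geodesic meets $\Sigma$ exactly once, so each of the two families can be labelled by its intersection point with $\Sigma$. Assigning to $p \in M$ the pair $(a(p), b(p)) \in \Sigma \times \Sigma$ of the two null curves through $p$ gives smooth characteristic coordinates, and the same construction in $\mathbb{R}^2_1$ produces the null coordinates $u = x+t$, $v = x-t$, for which the $x$-axis is $\{u = v\}$. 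In these coordinates the defining property of $i_f$ — that it sends the null rays through $p$ to the null rays through $i_f(p)$ meeting $\mathbb{R}_0$ at the $f$-images of the corresponding points of $\Sigma$ — becomes simply $(a, b) \mapsto (f(a), f(b))$. Thus $i_f$ is, read in the two charts, the map $f \times f$, whose inverse is $f^{-1} \times f^{-1}$; since $f$ is a $C^\infty$ diffeomorphism of $\mathbb{R}$, both $i_f$ and $i_f^{-1}$ are $C^\infty$.

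With smoothness in hand the proof closes quickly: $i_f : M \rightarrow i_f(M)$ is a causal isomorphism between two-dimensional spacetimes with non-compact Cauchy surfaces, both $i_f$ and $i_f^{-1}$ are $C^\infty$, so by Lemma \ref{lemma-smooth} the map $i_f$ is a $C^\infty$ conformal diffeomorphism, and by Theorem \ref{imbedding} its image is a globally hyperbolic subset of $\mathbb{R}^2_1$ carrying the $x$-axis as a Cauchy surface, which is exactly the assertion.

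I expect the main obstacle to be the smoothness step, specifically the claim that the characteristic coordinates $(a(p), b(p))$ are genuine $C^\infty$ charts on all of $M$ and that $i_f$ really is the diagonal map $f \times f$ in them. The first point relies on the absence of null cut points guaranteed by the non-compact Cauchy surface — the same feature exploited in the proof of Lemma \ref{lemma-smooth} — which ensures that the two null foliations are regular and that $p \mapsto (a(p), b(p))$ is a local diffeomorphism onto its image; the second point requires checking that the cone-intersection prescription of Theorem \ref{imbedding}, phrased in terms of $J^{\pm}$ and the endpoints of $S_p = J^-(p) \cap \Sigma$, coincides with tracing the two null rays through $p$ down to $\Sigma$. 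Care is also needed at $\Sigma$ itself, where one must confirm that the extensions across the Cauchy surface match so that $i_f$ is smooth there and not merely on $J^+(\Sigma)$ and $J^-(\Sigma)$ separately.
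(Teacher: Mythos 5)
Your proposal is correct and follows essentially the same route as the paper: take $f$ to be a $C^\infty$ diffeomorphism, extend it via Theorem \ref{imbedding}, reduce to smoothness of $F$ and $F^{-1}$ via Lemma \ref{lemma-smooth}, and establish smoothness from the fact that each point $p$ is determined by the two points where the null geodesics through $p$ meet $\Sigma$, on which $F$ acts by $f$. Your double-null coordinate formulation, in which $i_f$ reads as $f \times f$, is just a more explicit packaging of the paper's statement that $p$ depends smoothly on the endpoints of $S_p$ and $F(p)$ on their $f$-images, and it has the minor advantage of handling the matching across $\Sigma$ automatically.
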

 \begin{proof}
 Let $\Sigma$ be a Cauchy surface of $M$ and take a $C^\infty$ diffeomorphism
 $f : \Sigma \rightarrow \mathbb{R}_0$. Then, by the above
 argument, $f$ can be extended to a causal isomorphism $F : M
 \rightarrow \mathbb{R}^2_1$ onto an open subset that contains $\mathbb{R}_0$ as a Cauchy surface,
  which is a topological imbedding by Theorem \ref{imbedding}.

  By the previous lemma, it is sufficient to show that $F$ and $F^{-1}$ are $C^\infty$.
 For given $p \in J^+(\Sigma)$, since $\Sigma$ is a non-compact smooth
 one-dimensional manifold, $S_p$ is uniquely determined by two boundary
 points, say $x$ and $y$. Since there are two unique null geodesics
 $\gamma_1$ from $x$ to $p$ and $\gamma_2$ from $y$ to $p$, the
 dependence of $p$ on $x$ and $y$ is $C^\infty$. Likewise, the
 dependence of $F(p)$ on $F(x)$ and $F(y)$ is also $C^\infty$.
 Therefore, since $f$ is $C^\infty$, $F$ is $C^\infty$. By
 exactly the same manner, we can show that $F^{-1}$ is $C^\infty$.
 \end{proof}

From the above theorem, we can see that, to analyze conformal
structure of two-dimensional spacetimes with non-compact Cauchy
surfaces, it is sufficient to study conformal structures of an
open subset of $\mathbb{R}^2_1$ that contains $x$-axis as a Cauchy
surface.

\begin{lemma}
Let $U$ be a globally hyperbolic open subset of $\mathbb{R}^2_1$
that contains $x$-axis as a Cauchy surface and let $F : U
\rightarrow \mathbb{R}^2_1$ be a $C^\infty$ conformal
diffeomorphism into an open subset of $\mathbb{R}^2_1$ that
contains $x$-axis. Then, there exists unique $C^\infty$
diffeomorphisms $\varphi$ and $\psi$ of $\mathbb{R}$ such that
$\varphi^\prime \psi^\prime >0$ and $F$ is given by one of the
following form.\\
(1) $F(x,t) = \big( \varphi(x+t)+\psi(x-t), \varphi(x+t)-\psi(x-t)
\big).$\\
(2) $F(x,t) = \big( \varphi(x-t)+\psi(x+t), \varphi(x-t)-\psi(x+t)
\big).$\\
\end{lemma}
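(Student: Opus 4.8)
The plan is to work in null coordinates $u=x+t$, $v=x-t$, in which the flat metric of $\mathbb{R}^2_1$ is proportional to $du\,dv$. Writing the image point $(x',t')$ of $F$ in null coordinates $\mu=x'+t'$, $\nu=x'-t'$, the map $F$ becomes $(u,v)\mapsto(\mu,\nu)$ with $\mu=\mu(u,v)$, $\nu=\nu(u,v)$, and I would pull back the target metric: $F^{*}(d\mu\,d\nu)=\mu_u\nu_u\,du^2+(\mu_u\nu_v+\mu_v\nu_u)\,du\,dv+\mu_v\nu_v\,dv^2$. The conformal condition $F^{*}(d\mu\,d\nu)=\Omega^2\,du\,dv$ with $\Omega>0$ forces the coefficients of $du^2$ and $dv^2$ to vanish: $\mu_u\nu_u=0$ and $\mu_v\nu_v=0$. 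Since $F$ is a diffeomorphism its Jacobian never vanishes, so neither $\mu$ nor $\nu$ can be locally constant; this rules out $\mu_u=\mu_v=0$ and $\nu_u=\nu_v=0$, leaving exactly two possibilities on the connected set $U$: either $\mu_v\equiv0$, $\nu_u\equiv0$ (so $\mu=\varphi(u)$, $\nu=\psi(v)$) or $\mu_u\equiv0$, $\nu_v\equiv0$ (so $\mu=\varphi(v)$, $\nu=\psi(u)$). Converting back through $x'=(\mu+\nu)/2$, $t'=(\mu-\nu)/2$ and absorbing the factor $\tfrac12$ into $\varphi,\psi$ gives forms (1) and (2) respectively.

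Then I would read off the remaining claims. The surviving cross term is the conformal factor: $\Omega^2=\varphi'(u)\psi'(v)$ in the first case and $\Omega^2=\varphi'(v)\psi'(u)$ in the second, and $\Omega>0$ is exactly the assertion $\varphi'\psi'>0$, which in particular shows that $\varphi'$ and $\psi'$ never vanish. Smoothness is then immediate: $\varphi$ and $\psi$ arise by restricting the $C^\infty$ map $F$ to a null line, and, having nonvanishing derivative, are $C^\infty$ diffeomorphisms onto their images. Uniqueness is immediate as well, since in form (1) one has $\varphi(x+t)=x'+t'$ and $\psi(x-t)=x'-t'$ pointwise, so $\varphi$ and $\psi$ are determined by $F$ (and analogously in form (2)).

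The one genuinely substantive point, and the step I expect to be the main obstacle, is showing that $\varphi$ and $\psi$ are diffeomorphisms of all of $\mathbb{R}$. Single-valuedness on $\mathbb{R}$ requires that the intersection of each null line $\{u=c\}$ with $U$ be connected, for otherwise $\mu_v\equiv0$ would yield only a locally defined function of $u$; this is where global hyperbolicity of $U$ together with the hypothesis that the $x$-axis is a Cauchy surface enters, through the description of such subsets developed around Theorem \ref{imbedding}. For domain and range I would argue directly: since the $x$-axis lies in $U$, every $u\in\mathbb{R}$ occurs at the point $(u,0)$, so $\varphi$ is defined on all of $\mathbb{R}$; and since $F(U)$ contains the $x$-axis, for each $X\in\mathbb{R}$ the point $(X,0)$ equals $F(p)$ for some $p$, whence $X$ lies in the range of both $\varphi$ and $\psi$, giving surjectivity.

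Finally I would record an alternative route that bypasses the connectedness bookkeeping: a $C^\infty$ conformal diffeomorphism preserves null cones, hence, after possibly reversing the time orientation of the target (which merely interchanges forms (1) and (2)), it is a causal isomorphism between open subsets each containing the $x$-axis as a Cauchy surface. The representation with homeomorphisms $\varphi,\psi$ of $\mathbb{R}$ then follows from the improved form of Theorem \ref{auto} quoted before Theorem \ref{JGP}, after which one only has to upgrade ``homeomorphism'' to ``$C^\infty$ diffeomorphism'' and extract $\varphi'\psi'>0$ from the conformal factor as above.
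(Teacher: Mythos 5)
Your proof is correct, and it arrives at the same factorization as the paper, but by a cleaner route. The paper's (sketched) proof stays in $(x,t)$ coordinates: the conformal condition gives a Cauchy--Riemann-type first-order system, both components of $F$ are then shown to satisfy the wave equation, and d'Alembert's general solution produces the functions of $x \pm t$, after which the first-order system fixes the relative form of the two components up to an additive constant that is absorbed into $\varphi$ and $\psi$. You bypass the second-order step entirely: in null coordinates the conformal condition already reads $\mu_u\nu_u = 0 = \mu_v\nu_v$, and the nonvanishing Jacobian together with connectedness of $U$ forces $\mu = \varphi(u),\ \nu = \psi(v)$ or the swapped case; this purely first-order argument also makes transparent why exactly the two forms (1) and (2) occur, and the conformal factor $\varphi'\psi' = \Omega^2 > 0$ drops out at once. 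Your handling of the global issues is in fact more careful than the paper's: you correctly flag that integrating $\mu_v \equiv 0$ to a single-valued function of $u$ requires each null line to meet $U$ in a connected set, which is exactly where global hyperbolicity with the $x$-axis as Cauchy surface enters (each component of such an intersection is an inextendible null geodesic of $U$, hence must cross the Cauchy surface, which the full line does exactly once); the paper's d'Alembert step silently relies on the same fact. The endgame (domains and ranges equal to $\mathbb{R}$ because $U$ and $F(U)$ contain the $x$-axis; nonvanishing derivative plus monotonicity and surjectivity give diffeomorphisms; pointwise determination gives uniqueness) matches the paper, and your alternative route through the causal machinery of Theorem \ref{auto} is a legitimate second proof. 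One trivial slip: with your normalization $\varphi = \mu/2$, the uniqueness identity reads $2\varphi(x+t) = x' + t'$, not $\varphi(x+t) = x' + t'$.
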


\begin{proof}
We only sketch outlines of the proof since it can be obtained from
calculations and simple arguments. If a map $F : (x,t) \mapsto
(X,T)$ is a conformal map, from the definitions of conformal map,
we obtain two cases.
\begin{eqnarray*}
\mbox \,\,\, {(i)} X_x^2&<&T_x^2 \,\,\, \mbox{and} \,\,\, ( X_x = T_t \,\,\, \mbox{or} \,\,\, X_t = T_x).\\
\mbox{(ii)} \,\,\, X_x^2&<&T_x^2 \,\,\, \mbox{and} \,\,\, ( X_x =
-Y_t \,\,\,
\mbox{or} \,\,\, X_t = -T_x).\\
\end{eqnarray*}

We show the case (i) since the case (ii) can be solved by exactly
the same manner.

From $X_x = T_t$ and $X_t = T_x$, we can see that both $X$ and $T$
satisfies wave equation and thus from the general solution of wave
equations in one spatial coordinate, we have $X = \varphi(x+t) +
\psi(x-t)$ and $T = \alpha(x+t)+\beta(x-t)$. From the system of
partial differential equations $X_x = T_t$ and $X_t = T_x$, we
have $X = \varphi(x+t)+\psi(x-t)$ and $T = \varphi(x+t) -
\psi(x-t) +c$ for some $c \in \mathbb{R}$. By replacing $\varphi$
by $\varphi+\frac{c}{2}$ and $\psi$ by $\psi-\frac{c}{2}$, we have
$X = \varphi(x+t)+\psi(x-t)$ and $T = \varphi(x+t)-\psi(x-t)$.
From $X_x^2 < T_x^2$, we obtain $\varphi^\prime \psi^\prime > 0$.
Since the domains of definitions and ranges of $F$ must contain
$x$-axis, $\varphi$ and $\psi$ must be defined on the whole of
$\mathbb{R}$ and their ranges are $\mathbb{R}$. For $F$ to be a
diffeomorphism, $\varphi$ and $\psi$ must be diffeomorphisms.
\end{proof}

We now state a theorem corresponding to Theorem \ref{JGP}.

\begin{thm}
Let $M$ be a two-dimensional spacetime with non-compact Cauchy
surfaces. Then, the group of all conformal diffeomorphisms of $M$
is isomorphic to a subgroup of $Con(\mathbb{R}^2_1)$, the group of
all conformal diffeomorphisms of $\mathbb{R}^2_1$.
\end{thm}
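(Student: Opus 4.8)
The plan is to mirror the proof of Theorem~\ref{JGP} for the causal case, replacing the causal embedding of Theorem~\ref{imbedding} with the conformal embedding of Theorem~\ref{conimbedding}, and the structural description of Theorem~\ref{auto} with the preceding lemma on the form of conformal diffeomorphisms. First I would invoke Theorem~\ref{conimbedding} to fix a conformal diffeomorphism $i : M \to U$ onto a globally hyperbolic open subset $U \subset \mathbb{R}^2_1$ that contains the $x$-axis as a Cauchy surface. Conjugation by $i$, namely $f \mapsto i \circ f \circ i^{-1}$, is then a group isomorphism from $Con(M)$ onto the group of conformal self-diffeomorphisms of $U$, so the problem reduces to showing that this latter group is isomorphic to a subgroup of $Con(\mathbb{R}^2_1)$.

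The heart of the argument is an extension step. Given any conformal self-diffeomorphism $\tilde f : U \to U$, I would regard it as a conformal diffeomorphism from $U$ into $\mathbb{R}^2_1$ whose image $U$ again contains the $x$-axis, and apply the preceding lemma. The lemma produces unique $C^\infty$ diffeomorphisms $\varphi, \psi$ of $\mathbb{R}$ with $\varphi^\prime \psi^\prime > 0$, defined on all of $\mathbb{R}$, such that $\tilde f$ is given by one of the two displayed null-coordinate formulas. The decisive point is that the same pair $(\varphi, \psi)$, inserted into the same formula, defines a conformal self-diffeomorphism $\hat f$ of all of $\mathbb{R}^2_1$; thus $\tilde f$ extends canonically to $\hat f \in Con(\mathbb{R}^2_1)$ with $\hat f|_U = \tilde f$.

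It then remains to check that $\tilde f \mapsto \hat f$ is an injective group homomorphism. Injectivity is immediate, since $\hat f$ restricts to $\tilde f$ on $U$, so distinct self-maps of $U$ yield distinct extensions. For the homomorphism property I would use that $\hat f$ carries $U$ onto $U$ (because $\hat f|_U = \tilde f$ is a self-map of $U$), so that $\hat g \circ \hat f$ restricts on $U$ to $\tilde g \circ \tilde f$; since $\hat g \circ \hat f$ is then a conformal self-diffeomorphism of $\mathbb{R}^2_1$ extending $\tilde g \circ \tilde f$, the uniqueness of $(\varphi,\psi)$ in the lemma forces $\hat g \circ \hat f = \widehat{\tilde g \circ \tilde f}$. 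Composing this embedding with the isomorphism $Con(M) \cong Con(U)$ then yields the claim.

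The step I expect to carry the weight is the extension, and specifically the observation, supplied by the lemma, that $\varphi$ and $\psi$ are forced to be defined on the whole of $\mathbb{R}$ with range $\mathbb{R}$. This is exactly what the presence of the $x$-axis as a common Cauchy surface of domain and image buys us: along the Cauchy surface the null coordinates $x+t$ and $x-t$ each sweep out all of $\mathbb{R}$, so there is no room for $\varphi, \psi$ to be only partially defined, and hence no obstruction to extending $\tilde f$ globally. Without this Cauchy-surface hypothesis the generating diffeomorphisms could be defined only on proper subintervals and the extension would break down.
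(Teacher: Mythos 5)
Your proposal is correct and follows essentially the same route as the paper's own proof: reduce to a globally hyperbolic open subset $U \subset \mathbb{R}^2_1$ containing the $x$-axis via Theorem~\ref{conimbedding}, use the lemma's globally defined pair $(\varphi,\psi)$ to extend each conformal self-diffeomorphism of $U$ to all of $\mathbb{R}^2_1$, and observe that extension gives an injective homomorphism into $Con(\mathbb{R}^2_1)$. If anything, you are more explicit than the paper on two points it leaves implicit, namely the conjugation identification $Con(M)\cong Con(U)$ and the uniqueness argument showing the extension map respects composition.
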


\begin{proof}
By Theorem \ref{conimbedding}, we only need to study the group of
all conformal diffeomorphisms of a globally hyperbolic open subset
$U$ of $\mathbb{R}^2_1$ that contains $x$-axis as a Cauchy
surface. Then, for any conformal diffeomorphism on $U$, by the
previous lemma, we have two unique diffeomorphisms $\varphi$ and
$\psi$ defined on $\mathbb{R}$ in such a way that $F$, defined as
in the previous lemma, is the conformal diffeomorphism of $U$.
Since $\varphi$ and $\psi$ are defined on the whole of
$\mathbb{R}$, we can uniquely extend $F$ to a map $\overline{F}$
 defined on $\mathbb{R}^2_1$ and the extension is a conformal
 diffeomorphism of $\mathbb{R}^2_1$ by the previous lemma. Then, the map $F \mapsto \overline{F}$ is a group
isomorphism from $Con(M)$ into a subgroup of
$Con(\mathbb{R}^2_1)$.
\end{proof}

 This theorem is the counterpart to Theorem \ref{JGP} and the
 argument following Theorem \ref{JGP} can be applied to analysis
 of conformal structure of spacetimes with compact Cauchy surfaces
 and so we state the corresponding theorems without proofs.

 \begin{thm}
Let $M$ be a two-dimensional spacetime with compact Cauchy
surfaces and $\pi : \overline{M} \rightarrow M$ be a universal
covering map. Then, we have the following.\\
(1) The group of covering transformation $D$ consists of those
functions $\Phi$ given by $\Phi(u,v) = (u+m, v+m)$ in null
coordinates. The group $A$, the normalizer of $D$ in $Con(M)$
consists of pairs of two diffeomorphisms $(\varphi, \psi) \in
Aut(\overline{M})$ on $\mathbb{R}$ that satisfy the condition :
for any $n \in \mathbb{Z}$, there exists $m \in \mathbb{Z}$ such
that $f(x+n) -
f(x) = \frac{m}{2}$ for all $x$.\\
(2) The general form of conformal diffeomorphism on $M$ is given
by
$$g(e^{2\pi ix}, t) = \Big( e^{\pi i \{\varphi(u)+\psi(v)\}},
\frac{1}{2} ( \varphi(u) - \psi(v) ) \Big)$$ where $\varphi$ and
$\psi$ are given from (1).
 \end{thm}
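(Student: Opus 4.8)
The plan is to run, mutatis mutandis, the proof of Theorem \ref{form}, substituting the conformal tools of this section for the causal tools used there; the author's remark that these theorems follow by ``the same argument'' is precisely this parallel.

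First I would fix the covering data. Since the Cauchy surface is homeomorphic to $S^1$, Theorem 1 in \cite{Bernal1} gives $M \cong S^1 \times \mathbb{R}$, and I take the universal covering $\pi : \overline{M} \rightarrow M$, $\pi(x,t) = (e^{2\pi i x}, t)$, with $\overline{M}$ diffeomorphic to $\mathbb{R}^2$. Because $M$ is globally hyperbolic, $\overline{M}$ is globally hyperbolic with non-compact Cauchy surfaces, so Theorem \ref{conimbedding} lets me regard $\overline{M}$ as a globally hyperbolic open subset of $\mathbb{R}^2_1$ containing the $x$-axis as a Cauchy surface, with $\pi$ a local isometry (hence conformal). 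The identification $A = N(D)$ demanded by the statement is furnished by Proposition \ref{normal} together with Theorem \ref{all}, so it remains only to describe $D$ and $A$ concretely and to read off the form of the descended maps.

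For part (1), the computation of $D$ is verbatim that of Theorem \ref{form}: writing $\Phi \in D$ as $(x,t) \mapsto (\alpha,\beta)$, the identity $\pi \circ \Phi = \pi$ forces $(e^{2\pi i \alpha}, \beta) = (e^{2\pi i x}, t)$, whence $\alpha = x+m$, $\beta = t$ for some $m \in \mathbb{Z}$, i.e.\ $\Phi(u,v) = (u+m, v+m)$ in null coordinates $u = x+t$, $v = x-t$. To describe $A$, I would invoke the lemma of this section giving the form of a $C^\infty$ conformal diffeomorphism of an open subset of $\mathbb{R}^2_1$ containing the $x$-axis: every $\overline{g} \in Con(\overline{M})$ is, in null coordinates, $\overline{g} = \frac{1}{2}(\varphi(u)+\psi(v),\, \varphi(u)-\psi(v))$ (or the companion form with $u,v$ interchanged), for unique $C^\infty$ diffeomorphisms $\varphi,\psi$ of $\mathbb{R}$ with $\varphi'\psi' > 0$. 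Here the only change from the causal case is that ``homeomorphisms, both increasing or both decreasing'' is replaced by ``$C^\infty$ diffeomorphisms with $\varphi'\psi'>0$''; the algebra is otherwise identical. Requiring the induced map $g$ to be well defined on fibers then produces, for each $n \in \mathbb{Z}$, an $m \in \mathbb{Z}$ with
\begin{align*}
\varphi(u+n)+\psi(v+n) &= \varphi(u)+\psi(v)+m,\\
\varphi(u+n)-\psi(v+n) &= \varphi(u)-\psi(v),
\end{align*}
and adding and subtracting these gives exactly $\varphi(u+n)-\varphi(u) = \frac{m}{2} = \psi(v+n)-\psi(v)$, the stated condition $f(x+n)-f(x)=\frac{m}{2}$. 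The companion form is treated identically after interchanging $u$ and $v$.

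For part (2), once $\overline{g} \in A$ I apply Theorem \ref{main} (or directly the defining property of $A$ in Proposition \ref{subgroup}) to obtain the unique $g \in Con(M)$ with $\pi \circ \overline{g} = g \circ \pi$; substituting the form of $\overline{g}$ and $\pi(x,t) = (e^{2\pi i x}, t)$ yields $g(e^{2\pi i x},t) = (e^{\pi i\{\varphi(u)+\psi(v)\}},\, \frac{1}{2}(\varphi(u)-\psi(v)))$. The one point I would regard as the \emph{main obstacle} is checking that the displayed compatibility condition is not merely necessary but sufficient for $\overline{g}$ to descend to a genuine conformal diffeomorphism of $M$: well-definedness on fibers is precisely the condition just derived, injectivity and surjectivity follow as in Theorem \ref{all}, and conformality of $g$ is automatic because $\pi$ is a local isometry and $\overline{g}$ is conformal, so that $g$ is locally a composite of conformal maps. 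Everything else is the word-for-word transcription of the proof of Theorem \ref{form}, with the conformal form lemma playing the role of Theorem \ref{auto}.
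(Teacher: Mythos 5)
Your proposal is correct and is essentially the paper's own argument: the paper states this theorem without proof, remarking that the causal argument (Theorem \ref{form} and the discussion following Theorem \ref{JGP}) carries over once Theorem \ref{imbedding} and Theorem \ref{auto} are replaced by Theorem \ref{conimbedding} and the conformal-form lemma, which is exactly the substitution you carry out, including the correct new description of the pairs $(\varphi,\psi)$ as $C^\infty$ diffeomorphisms with $\varphi'\psi'>0$. The only cosmetic discrepancy is the factor $\frac{1}{2}$ you keep in the form of $\overline{g}$ (the section's lemma omits it), which is a harmless renormalization of $\varphi$ and $\psi$ and does not affect the fiber-compatibility computation or the descended formula.
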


\begin{thm}
Let $M$ be a two-dimensional spacetime with compact Cauchy
surfaces. Then, there exists two-dimensional spacetime
$\overline{M}$ with non-compact Cauchy surfaces of which the group
of conformal diffeomorphisms contains $D$ as a subgroup such that
$Con(M)$ is isomorphic to $A/D$ where $A$ is the normalizer of $D$
in $Aut(\overline{M})$. Conversely, given two-dimensional
spacetime $\overline{M}$ with non-compact Cauchy surfaces, if the
group of conformal diffeomorphisms of $\overline{M}$ contains $D$
as a subgroup, then there exists two-dimensional spacetime $M$
with compact Cauchy surfaces such that $Con(M)$ is isomorphic to
$A/D$.

\end{thm}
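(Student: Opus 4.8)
The plan is to mirror, \emph{mutatis mutandis}, the proof of the causal-automorphism counterpart proved above, systematically replacing causal isomorphisms by conformal diffeomorphisms and invoking the conformal (``respectively'') versions of the structural results of Section \ref{section:3}, together with the conformal imbedding Theorem \ref{conimbedding} in place of the merely causal imbedding Theorem \ref{imbedding}.

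For the forward direction I would take a universal covering $\pi : \overline{M} \rightarrow M$ and define $\Phi : Con(M) \rightarrow A/D$ by $\Phi(f) = \overline{f}D$, where $A$ is the conformal version of the group in Proposition \ref{subgroup} and $\overline{f}$ is any lift of $f \circ \pi$ through $\pi$. By the conformal lifting theorem stated immediately after Theorem \ref{iso} such a lift exists and is itself a conformal diffeomorphism; by Proposition \ref{normal} (applied to $Con(\overline{M})$), $D$ is normal in $A$, and by Theorem \ref{all} (conformal version), $A$ is exactly the normalizer $N(D)$ of $D$ in $Con(\overline{M})$. The conformal version of Theorem \ref{lift} shows that any two lifts of $f \circ \pi$ differ by an element of $D$ on the right, so $\Phi$ is well-defined, and surjectivity is immediate from the construction of $A$. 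Injectivity and the homomorphism property are then verbatim the arguments in the causal case: if $\overline{f}D = \overline{g}D$ then $\overline{f} = \overline{g}\circ\varphi$ with $\varphi \in D$, whence $f \circ \pi = \pi \circ \overline{f} = \pi \circ \overline{g} \circ \varphi = g \circ \pi$ forces $f = g$; and since $\pi \circ \overline{g}\circ\overline{f} = g \circ f \circ \pi$, the composite $\overline{g}\circ\overline{f}$ is a lift of $g \circ f$, giving $\Phi(g\circ f) = \Phi(g)\Phi(f)$. This is nothing but Theorem \ref{main} specialized to the conformal group, and since the Cauchy surface of $M$ is compact, the covering $\overline{M}$ has non-compact Cauchy surfaces, which is the desired witness.

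For the converse I would start from a two-dimensional spacetime $\overline{M}$ with non-compact Cauchy surfaces and, by Theorem \ref{conimbedding}, realize it conformally as a globally hyperbolic open subset of $\mathbb{R}^2_1$ containing the $x$-axis as a Cauchy surface. The crucial observation is that, in null coordinates, the elements of $D$ act by $(u,v) \mapsto (u+m, v+m)$, i.e.\ by translations, which are isometries and hence smooth conformal diffeomorphisms of $\mathbb{R}^2_1$; thus the hypothesis that $Con(\overline{M})$ contains $D$ is precisely the statement that $\overline{M}$ is invariant under these translations. Because $D$ then acts by isometries, freely and properly discontinuously, the flat metric descends to the quotient $M = \overline{M}/D$ and the quotient map $\pi : \overline{M} \rightarrow M$ is a time-orientation preserving local isometry, in particular a covering map with deck group $D$. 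Since $\overline{M}$ is diffeomorphic to $\mathbb{R}^2$ it is the universal cover of $M$, the image of the $x$-axis under $\pi$ is homeomorphic to $S^1$ and is a Cauchy surface of $M$, so $M$ has compact Cauchy surfaces. The forward argument then applies to give $Con(M) \cong A/D$.

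The main obstacle is not the group-theoretic bookkeeping, which is identical to the causal case, but ensuring that every step is genuinely conformal rather than merely causal --- conformal diffeomorphisms being strictly more rigid, as the example $\varphi = \psi = x^3$ before Lemma \ref{lemma-smooth} shows. Concretely, the two places requiring care are: (i) using the conformal imbedding Theorem \ref{conimbedding}, not Theorem \ref{imbedding}, so that the realization of $\overline{M}$ inside $\mathbb{R}^2_1$ is a conformal diffeomorphism onto its image rather than just a causal isomorphism; and (ii) verifying that $D$ acts by conformal (indeed isometric) maps, so that the descended structure on $M = \overline{M}/D$ is a bona fide conformal Lorentzian structure and $\pi$ is conformal. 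Both are settled by the remark that $D$ consists of translations in null coordinates, after which the entire argument is formally identical to the proof of the causal counterpart.
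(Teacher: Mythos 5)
Your proposal is correct and is essentially the paper's intended argument: the paper states this theorem \emph{without} proof, remarking only that the argument for its causal counterpart (the theorem following Theorem \ref{JGP}) carries over, and your proof is exactly that transfer --- the same isomorphism $\Phi(f)=\overline{f}D$, the conformal versions of the Section \ref{section:3} results for well-definedness, injectivity and the homomorphism property, and Theorem \ref{conimbedding} in place of Theorem \ref{imbedding} for the converse. Your extra observation that $D$ acts by null-coordinate translations, hence by isometries, so that the quotient $\overline{M}/D$ carries a bona fide conformal Lorentzian structure, merely fills in a detail the paper leaves implicit.
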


\begin{thm}
Let $M$ be a two-dimensional spacetime with compact Cauchy
surfaces. Then, $Con(M)$ is isomorphic to a subgroup of $Con(E)$.
\end{thm}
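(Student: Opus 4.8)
The plan is to mirror the proof just given for causal automorphism groups, replacing $Aut$ by $Con$ throughout. Three ingredients are already in place. First, the theorem immediately preceding this one identifies $Con(M)$ with $N_M/D$, where $\pi : \overline{M} \to M$ is the universal covering and $N_M$ is the normalizer of $D$ in $Con(\overline{M})$. Second, because $M$ has compact Cauchy surfaces, $\overline{M}$ is a two-dimensional spacetime with non-compact Cauchy surfaces; by Theorem \ref{conimbedding} it may be regarded as a globally hyperbolic open subset of $\mathbb{R}^2_1$ containing the $x$-axis as a Cauchy surface, and the extension map $F \mapsto \overline{F}$ supplied by the conformal counterpart of Theorem \ref{JGP} realizes $Con(\overline{M})$ as a subgroup of $Con(\mathbb{R}^2_1) = Con(\overline{E})$. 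Third, since $E$ is topologically $S^1 \times \mathbb{R}$ with universal covering $\mathbb{R}^2_1$, the same structural analysis gives $Con(E) = N_E/D$, where $N_E$ is the normalizer of $D$ in $Con(\mathbb{R}^2_1)$.

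First I would check that the embedding $Con(\overline{M}) \hookrightarrow Con(\mathbb{R}^2_1)$ carries the covering group $D$ of $M$ onto the covering group of $E$. In null coordinates every element of $D$ acts as $(u,v) \mapsto (u+m, v+m)$, that is, via the pair $(\varphi, \psi) = (\mathrm{id}+m, \mathrm{id}+m)$; since the extension map leaves $\varphi$ and $\psi$ unchanged, it fixes each such element, so both copies of $D$ are the same subgroup of $Con(\mathbb{R}^2_1)$. Thus $D \subseteq Con(\overline{M}) \subseteq Con(\mathbb{R}^2_1)$, and from $Con(\overline{M}) \le Con(\mathbb{R}^2_1)$ it follows at once that $N_M \le N_E$.

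With this in hand the argument closes exactly as in the causal case. The map $f : N_M \to N_E/D$ given by $f(g) = gD$ is a well-defined homomorphism because $D \subseteq N_M \subseteq N_E$, and its kernel is $N_M \cap D = D$. Hence $f$ induces an injective homomorphism $N_M/D \to N_E/D$, so that $Con(M) = N_M/D$ is isomorphic to a subgroup of $N_E/D = Con(E)$.

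The step I expect to demand the most care is the identification of the two covering groups inside $Con(\mathbb{R}^2_1)$: one must verify that the conformal imbedding of $\overline{M}$ can be arranged so that the deck transformations of $\pi : \overline{M} \to M$ coincide with the integer null-translations generating the covering group of $E$. Once the $x$-axis is fixed as a common Cauchy surface and the $S^1$ factor is normalized to have period one, as in the covering $(x,t) \mapsto (e^{2\pi i x}, t)$ used earlier, both covering groups are generated by $(u,v) \mapsto (u+1, v+1)$, and the remaining bookkeeping is routine.
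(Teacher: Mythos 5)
Your proof is correct and takes essentially the same route the paper intends: the paper states this theorem without proof as the conformal counterpart of the causal case, whose argument (realize $Con(\overline{M})$ as a subgroup of $Con(\mathbb{R}^2_1)$, note $N_M \le N_E$, and map $N_M \to N_E/D$ by $g \mapsto gD$ with kernel $D$ to get $Con(M) = N_M/D \hookrightarrow N_E/D = Con(E)$) you reproduce faithfully with $Aut$ replaced by $Con$. Your extra check that the deck group of $\pi : \overline{M} \to M$ and that of $E$ are the \emph{same} subgroup of null translations $(u,v) \mapsto (u+m,v+m)$ in $Con(\mathbb{R}^2_1)$ makes explicit a normalization the paper leaves implicit, but does not change the approach.
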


\section{Acknowledgement}

This research was supported by Basic Science Research Program
through the National Research Foundation of Korea(NRF) funded by
the Ministry of Education, Science and Technology(2014044991).


\begin{thebibliography}{999}

%
\bibitem{Angles} Pierre Angl$\grave{e}$s, {\it
Conformal Groups in Geometry and Spin Structures},
Birkh$\ddot{a}$user, Progress in Mathematical Physics, Vol. {bf
50} (2008).

%
\bibitem{Beem} J. K. Beem, P. E. Ehrlich, and K. L. Easley, {\it
Global Lorentzian Geometry}, Dekker, New York, (1996).


%
\bibitem{Bernal1} A. N. Bernal and M. S$\acute{a}$nchez,
{\it On smooth Cauchy hypersurfaces and Geroch's splitting
theorem}, Commun. Math. Phys. {\bf 243}, (2003) 461.

%
\bibitem{Blair} D. E. Blair, {\it Inversion theory and conformal
mapping}, Student Mathematical Library 9, AMS, (2000).

%
\bibitem{VR} V. V. Chernov and Y. B. Rudyak,
{\it Lorentzian covering space and homotopy classes}, Commun.
Math. Phys. {\bf 279}, (2008) 309.

%
\bibitem{DFN} B. A. Dubrovin, A.T. Fomenko and S.P. Novikov,
{\it Modern Geometry - Methods and Applications-Part I}, 2nd ed.
Springer, (1992).


%
\bibitem{Hartman} P. Hartman,
{\it On isometries and on a theorem of Liouville}, Math. Z. {\bf
69}, (1958) 202-210. Springer, (1992).

%
\bibitem{HKM} S. W. Hawking, A. R. King, and P. J. McCarthy,
{\it A new topology for curved space-time which incorporates the
causal, differential, and conformal structures}, J. Math. Phys.
{\bf 17}, (1976) 174.



%
\bibitem{CQG2} D.-H. Kim,
{\it An imbedding of Lorentzian manifolds}, Class. Quantum. Grav.
{\bf 26}, (2009) 075004.


%
\bibitem{CQG3} D.-H. Kim,
{\it Causal automorphisms of two-dimensional Minkowski spacetime},
Class. Quantum. Grav. {\bf 27}, (2010) 075006.


%
\bibitem{CQG4} D.-H. Kim,
{\it The group of causal automorphisms}, Class. Quantum. Grav.
{\bf 27}, (2010) 155005.

%
\bibitem{Fullwood} D.T. Fullwood,
{\it A new topology on space-time}, J. Math. Phys. {\bf 33},
(1992) 2232.

%
\bibitem{JMP1} D.-H. Kim,
{\it Lorentzian covering space and homotopy classes}, J. Math.
Phys. {\bf 52}, (2011) 122501.


%
\bibitem{JGP1} D.-H. Kim,
{\it A classification of two-dimensional spacetimes with
non-compact Cauchy surfaces}, J. Geom. Phys. {\bf 73}, (2013)
252-259.

%
\bibitem{Lehto} O. Lehto, K. I. Virtanen,
{\it Quasiconformal Mappings in the Plane }, Springer-Verlag,
Berlin et al., (1973).

%
\bibitem{Lester} J. A. Lester,
{\it The causal automorphism of de Sitter and Einstein cylinder
spacetimes}, J. Math. Phys. {\bf 25}, (1984) 113.

%
\bibitem{Malament} D. B. Malament,
{\it The class of continuous timelike curves determines the
topology of spacetime}, J. Math. Phys. {\bf 18}, (1977) 1399.

%
\bibitem{Schott} Martin Schottenloher, {\it
A Mathematical Introduction to Conformal Field Theory}, Springer,
Berlin (1997).

%
\bibitem{Zeeman} E.C. Zeeman,
{\it Causality implies the Lorentz group}, J. Math. Phys. {\bf 5},
 (1964) 490.






\end{thebibliography}
\end{document}